\newcommand{\cost}{\mathbf c}
\newcommand{\DII}{\Delta^0_2} 
\newcommand{\NN}{{\mathbb{N}}} 
\newcommand{\RR}{{\mathbb{R}}} 
\newcommand{\QQ}{{\mathbb{Q}}} 
\newcommand{\sub}{\subseteq} 
\newcommand{\sN}[1]{_{#1\in \NN}} 
\newcommand{\uhr}[1]{\! \upharpoonright_{#1}} 
\newcommand{\ML}{Martin-L{\"o}f} 
\newcommand{\SI}[1]{\Sigma^0_{#1}} 
\newcommand{\PI}[1]{\Pi^0_{#1}} 
\newcommand{\PPI}{\PI{1}}
\newcommand{\bc}{ \begin{center}} 
\newcommand{\ec}{ \end{center}}
\newcommand{\Halt}{{\ES'}} 
\newcommand{\ES}{\emptyset}
\newcommand{\tp}[1]{2^{#1}}
\newcommand{\la}{\langle} 
\newcommand{\ra}{\rangle}
\newcommand{\Kuc}{Ku{\v c}era{}}
\newcommand{\fao}[1]{\forall #1 \, } 
\newcommand{\exo}[1]{\exists #1 \, } 
\newcommand{\leT}{\le_{\mathrm{T}}} 
\newcommand{\ltt}{\le_{\mathrm{tt}}}
\newcommand{\Om}{\Omega} 
\newcommand{\n}{ 
\noindent} 
\newcommand{\vsps}{\vspace{3pt}} 
\newcommand{\vsp}{\vspace{6pt}} 
\newcommand{\leb}{\mathbf{\lambda}}
\newcommand{\sss}{\sigma}
\newcommand{\lland}{\ \land \ }
\newcommand{\seq}[1]{\la #1 \ra }
\newcommand\+[1]{\mathcal{#1}}
\newcommand{\UM}{\mathbb{U}}
\newcommand{\LR}{\Leftrightarrow} 
\newcommand{\LLR}{\ \Leftrightarrow \ } 
\newcommand{\RA}{\Rightarrow} 
\newcommand{\RRA}{\ \Rightarrow\ } 
\newcommand{\LA}{\Leftarrow}
\begin{document}

\title{Calibrating the complexity of $\Delta^0_2$ sets  \\ 
via their changes}

\author{Andr\'e Nies} 

\address{Dept.\ of Computer Science, University of Auckland}

\begin{abstract}  The  computational complexity of a $\DII$ set will be calibrated by  the amount of changes needed for  any of its computable approximations. Firstly,   we study \ML\ random sets, where we quantify the changes of initial segments. Secondly, we look at c.e.\ sets, where we quantify the overall   amount of changes   by obedience to cost functions.  Finally, we combine the two settings. The discussions lead to three basic principles on  how complexity and changes relate. 
\end{abstract}

\keywords{Randomness; $K$-triviality; changes; cost functions; King Arthur}

\thanks{Supported by the Marsden fund of New Zealand} % \subjclass{03D32; 03F60}

\bodymatter

\section*{Introduction} \setcounter{section}{1} In computability theory one studies the complexity of sets of natural numbers. A good arena for this is the class of~$\DII$ sets, that is, the sets Turing below the Halting problem $\Halt$. For, by the Shoenfield Limit Lemma, they can be approximated in a computable way. More precisely, the lemma says that a set $Z\sub \NN$ is Turing below the halting problem $\Halt$ if and only if there is a computable function $g \colon \, \NN \times \NN \to \{0,1\}$ such that {$Z(x)= \lim_s g(x,s)$} for each $x\in \NN$. We will write {$Z_s$} for $\{x \colon \, g(x,s)=1\}$. The sequence $\seq {Z_s} \sN s$ is called a \emph{computable approximation} of $Z$. 

The paper is set up as a play in three acts. The main topic of the play is to study the complexity of a $\DII$ set $Z$ by quantifying the amount of changes that are needed in any computable approximation $(Z_s)\sN s$ of $Z$. \begin{itemize}
\item In the first act, we will do this for random $\DII$ sets. They are played by knights living in a castle who do a lot of horseback riding. 
\item In the second act, we will do it mainly for computably enumerable (c.e.) sets. They are played by poor peasants living in a village who are trying  to pay their taxes. 
\item In the final act, we will relate the two cases. The knights and the peasants meet. \end{itemize}

The purpose of this  work is to  provide a unifying background for  results in   the papers  \citelow{Downey.Greenberg:nd,Figueira.Hirschfeldt.ea:nd,Figueira.ea:08,Franklin.Ng:10,Greenberg.Hirschfeldt.ea:nd,Greenberg.Nies:11,Kucera:86}. It contains many new observations on the amount of changes of knights and peasants, and how they relate. However, it  does not contain new technical results. 

\subsection*{\ML\ randomness} Our central algorithmic randomness notion is the one due to \ML\  \citelow{Martin-Lof:66}. It has many equivalent definitions. We give one: 
\begin{definition} \label{def:MLR}
	We say that a set $Z\sub \NN$ is \ML\ random (ML-random) if for every computable sequence $(\sss_i)\sN i$ of binary strings with $\sum_i \tp{-|\sss_i|} < \infty$, there are only finitely many $i$ such that $ \sss_i$ is an initial segment of $Z$. 
\end{definition}
Note that $\lim_i \tp{-|\sss_i|} = 0$, so this means that we cannot ``Vitali cover'' $Z$ (viewed as the binary expansion of a real number) with the collection of dyadic intervals corresponding to $(\sss_i)\sN i$. A sequence $(\sss_i)\sN i$ as above is called a Solovay test (see e.g.\ \cite[3.2.2]{Nies:book}).

\subsection*{Left-c.e.\ sets} We will often consider a special type of $\DII$ set. We say that $Z\sub \NN$ is \emph{left-c.e.} if it has a computable approximation $(Z_s) \sN s$ such that $Z_s \le_L Z_{s+1} $, where $\le_L$ denotes the lexicographical ordering. For instance, $\Omega$, the halting probability of a universal prefix-free machine $\UM$ (see, for instance, \cite[Ch.\ 2]{Nies:book}), is left-c.e. To see this, let $\Omega_s$ be the measure of $\UM$-descriptions $\sss$ where the computation $\UM(\sss)$ has converged by stage $s$. This is a dyadic rational, which we identify with a binary string.

 It is well known that $\Om$ is ML-random. For general background on algorithmic randomness, see \citelow{Downey.Hirschfeldt:book,Nies:book}. 

\subsection*{Quantifying changes} We introduce the terminology needed to quantify the changes of initial segments for a computable approximation of a $\DII$ set. 
\begin{definition}
	\label{def:g_change_set} Let $g\colon \NN \to \NN$. We say that a $\DII$ set $Z$ is a {$g$-change set} if it has a computable approximation $(Z_s)\sN s$ such that an initial segment $Z_s\uhr n$ changes at most $g(n)$ times. 
	
	We also say that $Z$ is $g$-computably approximable, or \emph{$g$-c.a.} To be $\omega$-c.a.\ means to be $g$-c.a.\ for some computable $g$. 
\end{definition}
We give an important example. 
\begin{proposition}
	\label{fact:leftce_few_changes} Every left-c.e.\ set is a $g$-change set for some $g = o(2^n)$. 
\end{proposition}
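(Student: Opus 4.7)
The natural left-c.e.\ approximation $(Z_s)$ already yields a weak bound. Identifying each $Z_s$ with the real $\sum_{k\in Z_s}2^{-k-1}\in [0,1]$, the initial segment $Z_s\uhr n$ is determined by the integer $\lfloor 2^n Z_s\rfloor$. Since $Z_s\leq_L Z_{s+1}$, these integers are non-decreasing in $s$ and bounded above by $\lfloor 2^n Z\rfloor\leq 2^n-1$, so the number of level-$n$ changes in $(Z_s)$ is at most $2^n-1$. This shows $Z$ is $\omega$-c.a., but the bound is only $\Theta(2^n)$ in general, since $\lfloor 2^n Z\rfloor/2^n\to Z$; it is not $o(2^n)$ unless $Z=0$. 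So the natural approximation cannot be used directly.

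To obtain the $o(2^n)$ bound, I would switch to a different $\DII$ approximation that exploits the fact that $Z$ is $wtt$-reducible to $\Halt$ with some computable use $u\colon\NN\to\NN$. For left-c.e.\ $Z$ one obtains such a reduction by composing Solovay reducibility $Z\leq_S \Om$ (a $wtt$-reduction with use $n+O(1)$) with a $wtt$-reduction $\Om \lwtt \Halt$. Letting $\phi$ denote the resulting reduction and $(\Halt_s)$ the standard c.e.\ enumeration of $\Halt$, define
\[
Y_s(k) := \phi^{\Halt_s}(k).
\]
Since each bit of $\Halt_s$ flips at most once as $s$ grows, the string $\Halt_s\uhr{u(k)}$ takes at most $u(k)+1$ values; hence $Y_s(k)$ changes at most $u(k)$ times, and therefore $Y_s\uhr n$ changes at most $\sum_{k<n}u(k)$ times.

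The main obstacle is to bound the use $u$ tightly enough to guarantee $\sum_{k<n}u(k)=o(2^n)$. One expects $u(k)$ to be polynomial (even linear) in $k$ with a judicious choice of the two reductions, giving a sum of order $O(n^{c+1})$, comfortably within $o(2^n)$. Verifying this---rather than settling for the naive exponential use one might get by unfolding the reductions carelessly---is where the real technical effort lies; it rests on structural properties of left-c.e.\ reals and on the specific shape of the standard $wtt$-reduction from $\Om$ to $\Halt$. If the required use bound cannot be read off directly, a fallback is to interleave the approximation $(Y_s)$ with a computable speed-up of $(Z_s)$ in which consecutive stages are forced to differ by a decreasing threshold, and then argue that the combined approximation still has only $o(2^n)$ level-$n$ changes.
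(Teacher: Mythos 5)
There is a genuine gap, and it begins with your reason for abandoning the natural approximation. You bound the number of level-$n$ changes by $\lfloor 2^n Z\rfloor\le 2^n-1$ and then argue that since this \emph{bound} is $\Theta(2^n)$, the monotone approximation ``cannot be used directly.'' That conflates the bound you happened to derive with the actual number of changes. The proposition only asks for the true change-counting function to be $o(2^n)$ (and note that $g$ is not required to be computable here). For the natural left-c.e.\ approximation this is automatic, and it is exactly the paper's one-line proof: fix $k$ and let $t$ be a stage by which $Z\uhr k$ has stabilized. For $n>t$, at most $t$ changes of $Z_s\uhr n$ occur before stage $t$, and every change after stage $t$ strictly increments the integer coded by bits $k,\dots,n-1$ (the first $k$ bits being frozen and the approximation being $\le_L$-monotone), so there are at most $2^{n-k}$ of them. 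Hence the change count $c(n)$ satisfies $c(n)\le t+2^{n-k}$ for all $n>t$, so $\limsup_n c(n)/2^n\le 2^{-k}$ for every $k$, i.e.\ $c(n)=o(2^n)$. The approximation you discarded already proves the proposition.

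The replacement route you sketch cannot be repaired. Its crux --- a wtt-reduction of $Z$ to $\Halt$ with use $u(k)$ polynomial in $k$ --- would make $Z$ a $g$-change set for a polynomially bounded \emph{computable} $g$. For $Z=\Om$ (or any ML-random left-c.e.\ real) this contradicts the Figueira--Hirschfeldt--Miller--Ng--Nies lower bound quoted in this very paper: no ML-random set is even a $\lfloor 2^n/\log\log n\rfloor$-change set, let alone a polynomial-change set. So the ``real technical effort'' you defer is not merely hard; the statement it would establish is false, and the honest use of the standard reduction of a left-c.e.\ real to $\Halt$ queries on the order of $2^k$ positions for bit $k$, giving back only $\Theta(2^n)$. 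A secondary problem is the step $Z\le_S\Om\Rightarrow$ ``wtt with use $n+O(1)$ on initial segments'': knowing a real to within $2^{-n}$ does not determine its first $n-O(1)$ binary digits (long blocks of $0$s or $1$s), so even that link does not yield the reduction on bits that you assume. The concluding ``fallback'' is too vague to count as an argument.
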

\begin{proof} Fix a computable approximation $(Z_s) \sN s$ of $Z$  such that $Z_s \le_L Z_{s+1} $ for each $s$. 
	%It suffices to note that if $Z \uhr k$ is stable by stage $t$, then for every $n \ge k+t+1$, $ Z\uhr n$ changes at most $2^{n-k}$ times. 
	It suffices to note that if $Z \uhr k$ is stable by stage $t$, then for every $n > t$, $ Z\uhr n$ changes at most $t+ 2^{n-k}$ times.
\end{proof}
If we say that a $\DII$ set \emph{needs more than $g$ changes}, we simply mean that it is not a $g$-change set. Figueira, Hirschfeldt, Miller, Ng and Nies~\citelow{Figueira.Hirschfeldt.ea:nd} studied such lower bounds for the changes of random $\DII$ sets. 

%(or, to use Malory's language, ``raundon $\DII$ knights''). 
\begin{proposition}
	\citelow{Figueira.Hirschfeldt.ea:nd} Let $Z$ be a random $\DII$ set. Let $q:\NN\to\RR^+$ be computable and nonincreasing. If $Z$ is a $\lfloor q(n) 2^n \rfloor $-change set then $\lim_n q(n)>0$. 
\end{proposition}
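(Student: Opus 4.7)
The plan is to argue the contrapositive: suppose $Z$ is a $\lfloor q(n)2^n\rfloor$-change set via a computable approximation $(Z_s)\sN s$, with $q$ computable, nonincreasing, and $\lim_n q(n)=0$. I will build a Solovay test that $Z$ extends infinitely often, contradicting Definition~\ref{def:MLR}.

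Using the computability of $q$ together with $q(n)\to 0$, I recursively compute a strictly increasing sequence of lengths $n_0<n_1<\cdots$ with $q(n_k)\le 2^{-k}$: at stage $k$, search for the least $n_k>n_{k-1}$ with $q(n_k)\le 2^{-k}$, which must exist. For each $k$ let $S_k=\{Z_s\uhr{n_k}:s\in\NN\}$, which is a c.e.\ set of strings uniformly in $k$. Since $Z_s\uhr{n_k}$ changes at most $\lfloor q(n_k)2^{n_k}\rfloor$ times along the approximation, it takes at most $\lfloor q(n_k)2^{n_k}\rfloor+1\le 2^{n_k-k}+1$ distinct values, so $\sum_{\sss\in S_k}2^{-|\sss|} \le 2^{-k}+2^{-n_k}$.

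Enumerating $\bigcup_k S_k$ as a single computable sequence $(\sss_i)\sN i$ via a standard c.e.\ enumeration, the total mass is bounded by $\sum_k(2^{-k}+2^{-n_k})\le 2+\sum_k 2^{-n_k}<\infty$, since $n_k\ge k$. Thus $(\sss_i)\sN i$ is a Solovay test, and $Z\uhr{n_k}\in S_k$ for every $k$, so infinitely many $\sss_i$ are initial segments of $Z$, contradicting ML-randomness of $Z$.

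The only subtle point is making the total mass converge. From $q(n)\to 0$ one cannot infer $\sum_n q(n)<\infty$ (consider $q(n)=1/\log n$), so naively listing, for every $n$, all length-$n$ strings appearing as some $Z_s\uhr n$ would not yield a Solovay test. The remedy is to thin to a sparse subsequence $n_k$ along which $q$ decays geometrically; computability of $q$ is precisely what makes this selection effective and the resulting test genuinely Solovay.
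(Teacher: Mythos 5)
Your proof is correct and is essentially the standard argument for this result (the paper itself only cites \citelow{Figueira.Hirschfeldt.ea:nd} and gives no proof): thinning to a computable subsequence $n_k$ along which $q(n_k)\le 2^{-k}$ and collecting the at most $2^{n_k-k}+1$ values of $Z_s\uhr{n_k}$ into a Solovay test is exactly the intended route, and your mass estimate $\sum_k(2^{-k}+2^{-n_k})<\infty$ is right. One small effectivity nit: for a computable real-valued $q$ the relation $q(n)\le 2^{-k}$ is not decidable, so you cannot take the \emph{least} such $n_k$; instead search (by dovetailing rational approximations) for \emph{some} $n_k>n_{k-1}$ with a certificate that $q(n_k)<2^{-k}$ --- this search halts because $q\to 0$, and nothing else in your argument changes.
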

For example, let $q(n)= 1/ \log \log n$. Then $\lim_n q(n) = 0$. Thus, no \ML\ random set is a $\lfloor 2^n/\log \log n \rfloor $-change set. As a consequence, for the number of initial segment changes for $\Omega$, the upper bound~$o(2^n)$ is not far below $2^n$. 

\section*{Act 1: \ML\ random sets and   initial segments} \setcounter{section}{2}

%\scalebox{.1}{
%\includegraphics{../figures_background/castle.jpg}}

\begin{minipage}
	{12cm} {\it 
\emph{The players: }

\vsps

$\Omega$, the king.

$Z$, a raundon $\DII$-knight.

More knights.

\vsps

\emph{The scene:} The fields outside a castle.

} \end{minipage}

\vsp

\subsection{Randomness enhancement} The \emph{randomness enhancement thesis} states that for a \ML\ raundon~$Z$, 

\bc \emph{$Z$ gets more random $\LR$ $Z$ is computationally less complex.} \ec The thesis was explicitly and in full generality first mentioned in  Section 4 of Nies~\citelow{Nies:tut}, and published in \citelow{Nies:5questions}. Particular instances were given  in the literature much earlier on, possibly as far back as Kurtz \citelow{Kurtz:81}. 

The thesis was was initially observed only for randomness notions not compatible with being $\DII$. Recall that a set  $Z$ is weakly 2-random if $Z$ is not in any null $\PI 2$ class; $Z$ is 2-random if it is ML-random relative to $\Halt$; $Z$ is low for $\Om$   if $\Om$ is ML-random relative to $Z$.  Lowness for $\Om$  was first studied in Nies, Terwijn and Stephan~\citelow{Nies.Stephan.ea:05}. 
\begin{example}
	Let $Z \sub \NN$ be ML-random. Then 
	\begin{itemize}
		\item[] $Z$ and $\Halt$ form a minimal pair $\LLR$ $Z$ is weakly 2-random,  
		\item[] $Z$ is low for $\Om$ $\LLR$ $Z$ is 2-random. 
	\end{itemize}
\end{example}
The first example is due to Hirschfeldt and Miller; see \cite[5.3.16]{Nies:book}. The second example follows from literature results:  by Kurtz \citelow{Kurtz:81}, 2-randomness is equivalent to randomness relative to $\ES'$. Chaitin realized that $\Om \equiv_T \ES'$. Since $\Om$ is ML-random, we can now invoke van Lambalgen's theorem to conclude that $\Om $ is ML-random in $Z$ iff $Z$ is ML-random in $\Om$.   The result  was  first explicitly  mentioned   in~\citelow{Nies.Stephan.ea:05}.

In contrast, the following, later result of Franklin and Ng~\citelow{Franklin.Ng:10} is also relevant for $\DII$ ML-random sets $Z$. A \emph{difference test} consists of a sequence of uniformly given $\SI 1 $ classes $\+ A_m$ and a further $\SI 1$ class $\+ B$ such that $\leb(\+ A_m-\+ B)\le \tp{-m}$ for each $m$. To \emph{pass} the test means to be out of $\+ A_m-\+ B$ for some~$m$. A set $Y$ is \emph{difference random} if it passes all difference tests. 

%(Without changing the randomness class, we  could 	 assume that the test is nested, after
%replacing $\+ A_m-\+ B$ by $\bigcap_{i \le m} \+ A_i - \+ B$ if necessary.)  
\begin{example}
	[\citelow{Franklin.Ng:10}] Let $Z$ be ML-random. Then \begin{itemize} 
	\item[] $Z$ is Turing incomplete {$\LLR$} $Z$ is difference random. \end{itemize}
\end{example}
Weak Demuth randomness is a property strictly in between weak 2-random and ML-random; see for instance \citelow{Kucera.Nies:11,Kucera.Nies:12}. Franklin and Ng have recently introduced a property of a c.e.\ set $A$ called strong promptness, which strictly  implies being promptly simple: there is a computable enumeration $(A_s)\sN s$ of $A$ and an $\omega$-c.a.\  bound $g$ such that $|W_e| \ge g(e)$   implies that $A$ promptly enumerates some element of~$W_e$. They used this property  to provide a further, related, example of randomness enhancement that is also relevant to $\DII$ sets: a ML-random $Z$ does not compute a strongly prompt set if and only if it is weakly Demuth random.

\subsection{Malory's thesis}

All  the quotes below  are from Le Morte D'Arthur (1483) by Sir Thomas Malory\footnote{Sir Thomas  Malory was an  English writer who died 1471. His major work, ``Le Morte d'Arthur'', is a prose translation of a collection of legends about King Arthur (OED). It was printed in 1483 by William Caxton, who also acted as a (somwehat sloppy) editor.}.

\n  {Book III, Chapter IX: How Sir Tor rode after the knight with the brachet\footnote{\n A brachet is a small hunting dog.}, and of his adventure by the way.} 
\begin{quote}
	(...) And anon the knight yielded him to his mercy. But, sir, I have a fellow in yonder pavilion that will have ado with you anon. He shall be welcome, said Sir Tor. Then was he ware of another knight coming with great raundon\footnote{The Old French noun ``randon'', great speed, is derived from ``randir'', to gallop. It has been used in English since the 14th century. When used in a metaphorical way, ``randon'' meant ``impetuousity'' (OED). Malory's spelling  ``raundon'' may have been  an attempt to represent the French pronounciation.}, and each of them dressed to other, that marvel it was to see; but the knight smote Sir Tor a great stroke in midst of the shield that his spear all to-shivered. And Sir Tor smote him through the shield below of the shield that it went through the cost of the knight, but the stroke slew him not. (...) 
\end{quote}

\vsps 

From this  quote   one can derive what we will call {Sir Thomas Malory's thesis}.

\vsps

\emph{Let $Z$ be a \ML\ raundon $\DII$ set. Then}

\bc \emph{$Z$ gets more raundon $\LR$ $Z$ needs more changes. } \ec 

\subsection*{Combining the two theses} We combine the randomness enhancement thesis with Malory's thesis by ``transitivity''. This yields the main principle of this act: for a ML-random $\DII$ set $Z$, 

\vsps \bc \fbox{$Z$ is computationally less complex {$\LLR$} $Z$ needs more changes.} \ec

\vsps

We will give multiple evidence for this principle. Firstly, we consider random $\DII$ sets that are complex. This should mean that they can be computably approximated with \emph{few} changes. Thereafter, we consider random $\DII$ sets that are not complex. They should need \emph{a lot} of changes.

\vsp

\n \emph{Evidence for the main principle: Complex random $\DII$ sets. }

\n 1. Chaitin's halting probability $\Omega$ is Turing complete. By Fact~\ref{fact:leftce_few_changes}, its rate of change is $o(2^n)$, which is at the bottom of the scale of possible changes for a random $\DII$ set. 

\n 2. Consider all the ML-random sets that are $\omega$-c.a.\ (Def.\ \ref{def:g_change_set}). These sets change much less than a general~$\DII$ set. By the already mentioned unpublished work of Hirschfeldt and Miller (see \cite[5.3.15]{Nies:book}), it turns out that they are ``jointly'' complex: there is an incomputable c.e.\ set Turing (even weak truth-table) below all of them. In contrast, by the low basis theorem with upper cone avoidance, for each incomputable c.e.\ set $A$, there is a ML-random $\DII$ set $Z$ not Turing above~$A$. The closer to computable $A$ is, the more $Z$ has to change; certainly $Z$ is not $\omega$-c.a.\ in general. 

\vsp 

\n \emph{Evidence for the main principle: Non-complex random $\DII$ sets.} 

\n Recall that a set $Z\sub \NN$ is \emph{low} if $Z' \leT \ES'$, and \emph{superlow} if $Z' \ltt \ES'$. To be superlow, a ML-random $\DII$ set needs to change considerably,  by a result of Figueira, Hirschfeldt, Miller, Ng and Nies. 
\begin{theorem}
	\cite[Cor.\ 24]{Figueira.Hirschfeldt.ea:nd} Suppose that a \ML\ random set $Z$ is superlow. Then $Z$ is not an $O(2^n)$ change set. 
\end{theorem}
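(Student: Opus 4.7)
\emph{Proof plan.} I would argue by contradiction. Assume that $Z$ is \ML\ random, superlow, and admits a computable approximation $(Z_s)\sN s$ in which $Z_s\uhr n$ changes at most $c\cdot 2^n$ times, for some constant $c$. The goal is to construct a Solovay test with $Z\uhr n$ as an element for every $n$, contradicting \ML\ randomness.

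The first attempt --- enumerate every string ever taken by $Z_s\uhr n$, across all $n$ and $s$ --- fails immediately, since the total Solovay weight is $\sum_n c\cdot 2^n\cdot 2^{-n}=\infty$. The thinning must come from superlowness, which is equivalent to the assertion that for every $Z$-c.e.\ set $A$ given by an index $e$, there is a computable approximation $(A_s)\sN s$ from $\ES'$ whose number of mind changes on each input $x$ is bounded by a computable function of $(e,x)$. I would apply this to $A=\{\la n,k\ra : \sss_{n,k}\pr Z\}$, where $\sss_{n,k}$ denotes the $k$-th distinct string passed through by $(Z_s\uhr n)\sN s$, a uniformly computable object thanks to the change bound. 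For each $n$ there is a unique $k_n\le c\cdot 2^n$ with $\la n,k_n\ra\in A$, namely the index of the final value $\sss_{n,k_n}=Z\uhr n$; the $\ES'$-approximation to $A$ tentatively enumerates only a computably bounded number of other pairs $\la n,k\ra$ before retracting them. Let $\+ C_n$ consist of the strings $\sss_{n,k}$ for all pairs $\la n,k\ra$ ever appearing in this $\ES'$-approximation.

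The candidate Solovay test is $\bigcup_n \+ C_n$, which is c.e. Since $Z\uhr n=\sss_{n,k_n}\in\+ C_n$ for every $n$, the test would witness non-randomness of $Z$ as soon as its total weight is finite. The main obstacle, and the place where the $O(2^n)$ hypothesis does real work, is the quantitative control: one must arrange the construction --- perhaps by packaging $A$ so that only pairs $\la n,k\ra$ with $k$ in a computably restricted sub-range are ever considered --- so that $\sum_n |\+ C_n|\cdot 2^{-n}<\infty$. The absolute bound $k_n\le c\cdot 2^n$ restricts the candidates at level $n$, and the computable mind-change bound afforded by superlowness limits how many of these are tentatively included; balancing these two bounds so as to secure convergence of the weight series is the delicate heart of the proof.
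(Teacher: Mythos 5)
First, a point of comparison: the paper does not prove this theorem at all --- it is quoted from \cite[Cor.~24]{Figueira.Hirschfeldt.ea:nd} (and the text only remarks that the stronger Theorem~23 of that paper replaces $O(2^n)$ by $O(h(n)2^n)$ for some order function $h$). So there is no in-text proof to measure yours against, and your proposal must stand on its own. It does not, for a concrete reason. You correctly identify the shape of the argument --- a Solovay test containing $Z\uhr n$ for infinitely many $n$, with superlowness used to thin the $O(2^n)$ candidate strings at each level --- but the reduction you chose cannot deliver the thinning. For your set $A=\{\la n,k\ra : \sss_{n,k}\pr Z\}$, superlowness (via $A\le_m Z'$ and $Z'\le_{\mathrm{tt}}\Halt$) gives a computable approximation of $A$ whose number of mind changes on each \emph{single} input $\la n,k\ra$ is computably bounded; it gives no global bound on how many of the roughly $c\cdot 2^n$ inputs at level $n$ are ever guessed to lie in $A$. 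An approximation in which every pair $\la n,k\ra$ with $k\le c\cdot 2^n$ enters and then leaves costs each input only two mind changes, which is consistent with any mind-change bound one actually obtains. In that scenario $|\+ C_n|$ is about $c\cdot 2^n$, the level-$n$ weight is about $c$, and $\sum_n|\+ C_n|2^{-n}$ diverges --- exactly the failure of your ``first attempt'', now merely relocated behind the jump.

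Your proposed repair --- ``packaging $A$ so that only pairs $\la n,k\ra$ with $k$ in a computably restricted sub-range are ever considered'' --- cannot work as stated: the true index $k_n$ can a priori be any value in $[0,c\cdot 2^n]$, and if some computable sub-range of size small enough to make the weights summable were guaranteed to contain it, superlowness would play no role and the conclusion would wrongly extend to every ML-random $O(2^n)$-change set (which Theorem~\ref{thm:Fig} shows it must not, since a merely low such set exists). So the ``delicate heart'' you defer is not a technical verification but the entire content of the theorem: one needs a different mechanism by which the computable mind-change bound for well-chosen queries to $Z'$ is played off quantitatively against the $c\cdot 2^n$ change bound of the given approximation, with the test lengths chosen along a sparse computable sequence calibrated to both. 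A per-input mind-change bound on a $Z$-c.e.\ set with exponentially many relevant inputs per level is the wrong currency, and until that interaction is specified and the weight series is actually shown to converge, there is no proof here.
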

In fact, in \cite[Thm.\ 23]{Figueira.Hirschfeldt.ea:nd} they showed the slightly stronger result that $Z$ is not an $O(h(n) 2^n)$ change set for some order function~$h$. 

In contrast, mere lowness can be achieved with fewer changes: 
\begin{theorem}
	\cite[Thm.\ 11]{Figueira.Hirschfeldt.ea:nd} \label{thm:Fig} Some low \ML\ random set $Z$ is an $o(2^n)$ change set. 
\end{theorem}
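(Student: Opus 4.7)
The plan is to build, for a suitable computable $f\colon \NN \to \NN$ with $f(n) = o(2^n)$, a low \ML\ random $\DII$ set $Z$ together with a computable approximation $(Z_s)\sN s$ such that $Z_s \uhr n$ changes at most $f(n)$ times. I would combine a low-basis-theorem construction inside a $\PPI$ subclass of $\mlr$ with a delayed-enumeration scheme that keeps initial-segment perturbations sparse.

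First, fix a computable $f$ with $f(n) \to \infty$ and $\sum_n f(n)\, 2^{-n}$ bounded by a small constant; for instance $f(n) = \lfloor 2^n / (c n^2) \rfloor$ for a large enough constant $c$. Choose a nonempty $\PPI$ class $\P \sub \mlr$ of large measure, such as the complement of a tail component $\+V$ of a universal Martin-L\"of test. Build a descending sequence $\P = \P_0 \supseteq \P_1 \supseteq \cdots$ of nonempty $\PPI$ classes by a Jockusch--Soare low-basis construction: at step $e$, commit either to $\Phi_e^Z(e) \uparrow$ for all $Z \in \P_{e+1}$ (when this is realizable by a nonempty subclass) or to a uniform use bound forcing $\Phi_e^Z(e) \downarrow$. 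Any $Z \in \bigcap_e \P_e$ is then both low and ML-random, and the construction yields a $\DII$ extraction of such a $Z$.

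To secure the change bound, arrange each refinement $\P_e \to \P_{e+1}$ so that its additional enumeration involves only strings of length $\geq g(e)$, where the computable $g$ grows fast enough that at any fixed level $n$ only finitely many refinements contribute. Since enumerating one string of length $m \leq n$ removes $2^{n-m}$ length-$n$ strings at once but triggers at most one change of $Z_s \uhr n$ (the currently selected length-$n$ extension is replaced by the next admissible one), the change count at level $n$ is bounded by the number of length-$\leq n$ enumeration events. Pre-allocate a length-$n$ budget of $f(n)$ such events, split between the initial covering of $\cantor \setminus \P$ (using $\leq f(n)/2$ length-$n$ strings drawn from the tail $\+V$, which has arbitrarily small total measure) and the low-basis refinements (each removing a single sufficiently long string, which fits in the remaining budget). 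The main obstacle is that $Z$ cannot be the \emph{leftmost} element of $\bigcap_e \P_e$, since that path is left-c.e.\ and, being ML-random, Turing complete, contradicting lowness; so $(Z_s)$ must track the specific non-leftmost path canonically produced by the low-basis procedure, and the accounting must absorb the additional ``internal'' jumps this entails. Carrying out this stage-by-stage bookkeeping to verify that each of the $f(n)$ allocated length-$n$ slots is used at most once is the delicate part of the argument.
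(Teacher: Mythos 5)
First, a caveat: the paper contains no proof of this theorem --- it is quoted from Figueira, Hirschfeldt, Miller, Ng and Nies, and the text only records one feature of their argument (the $\omega$-c.a.\ modulus $m(k)$ with $O(4^k)$ changes). So I am assessing your proposal on its own terms and against that cited argument.

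Your very first step is fatal, and it is refuted by a proposition quoted earlier in this same paper. You fix a computable target $f(n)=\lfloor \tp{n}/(cn^2)\rfloor$ with $\sum_n f(n)\tp{-n}<\infty$ and aim to make $Z$ an $f$-change set. But $f(n)=\lfloor q(n)\tp{n}\rfloor$ for the computable nonincreasing $q(n)=1/(cn^2)$ with $\lim_n q(n)=0$, and the proposition of Figueira et al.\ stated in the introduction says precisely that \emph{no} \ML{} random $\DII$ set is a $\lfloor q(n)\tp{n}\rfloor$-change set for such a $q$. The same obstruction applies to any computable $f$ with $f(n)\tp{-n}$ monotonically tending to $0$, which is exactly the regime your summability hypothesis puts you in. The point you are missing is that ``$Z$ is an $o(\tp{n})$ change set'' must be witnessed by a \emph{non-computable} bound $g$, obtained after the fact as in Proposition~\ref{fact:leftce_few_changes}: one shows that for each $k$ there is a (non-computable, merely $\omega$-c.a.) threshold $m(k)$ such that $Z\uhr n$ changes at most $\tp{n-k}$ times for $n\ge m(k)$, and then $g(n)=t_k+\tp{n-k}$ on the $k$-th block. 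Your uniform budget of $f(n)$ ``slots'' per level cannot be met even by the base class: a $\PPI$ class of measure $1-\delta$ may have $\delta\tp{n}$ generators of length $n$ in its complement, which is $\Theta(\tp{n})$, not $O(\tp{n}/n^2)$; ``arbitrarily small total measure'' does not translate into the per-length generator count you need, and by the proposition it provably cannot.

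The second gap is the lowness refinements. You cannot ``arrange each refinement $\P_e\to\P_{e+1}$ so that its additional enumeration involves only strings of length $\ge g(e)$,'' nor is each refinement ``removing a single sufficiently long string'': the class $\{Z:\Phi_e^Z(e)\uparrow\}$ is given by the functional, not by you; its complement is a $\SI{1}$ class generated by strings of arbitrary, uncontrollable lengths, and inside $\P_e$ it may carve away almost all the measure, so that the canonical path of $\P_{e+1}$ can drift through essentially all $\tp{n}$ strings at level $n$. What actually rescues the construction in the cited paper is a different accounting: requirement $e$ is localized above a \emph{stem} $Z\uhr{h(e)}$ for a fast-growing $h$, so that its activity (however wild the class $\{Z:\Phi_e^Z(e)\uparrow\}$ is) can only move $Z\uhr n$ for $n>h(e)$, and above a stem of length $m$ the leftmost-path drift at level $n$ is trivially at most $\tp{n-m}$ regardless of measure; one then bounds the number of injuries of each requirement and sums $\sum_e I_e\,\tp{n-h(e)}$, which is where the $O(4^k)$ bound on the changes of $m(k)$ comes from. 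You correctly anticipate that $Z$ cannot be leftmost and that injuries cause extra jumps, but you defer exactly this part --- which is the entire content of the proof --- to ``delicate bookkeeping,'' while the framework you set up (fixed computable budget, length-controlled refinements) cannot support it.
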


We note that the latter result also appears to give some {contrary evidence} to the main principle that $Z$ is computationally less complex if and only if {$Z$ needs more changes}: The set $Z$ constructed in Theorem~\ref{thm:Fig} has a rate of change similar to the one of~$\Omega$, but is low. This suggests that we would need a fine analysis of change bounds in $o(2^n)$ to differentiate between $\Om$ and low random sets. In the the proof of Theorem~\ref{thm:Fig},   the  function $m(k)$ quantifying the ``o'' in $o(2^n)$,  that is,  the minimal $r$ such that for each $n \ge r$, $Z \uhr n$ has at most $\tp{n-k} $ changes, is an $\omega$-c.a. function with $O(4^k)$ increases. In contrast,   $\Om $ only needs $O(2^k)$ increases of its analogous  function.

%Demuth tests
%     \colorlet{mystruct}{structure}
%    \colorlet{structure}{black}
%    \usestructuretemplate{\color{structure}}{}
%    %\beamertemplateshadingbackground{yellow!50}{magenta!50}
%    \beamersetaveragebackground{yellow!40!white}
%    
\section*{Act 2: Computably enumerable sets and cost functions} 

%\scalebox{1}{
%\includegraphics{../figures_background/village.jpg}}

\begin{minipage}
	{12cm}

{\it  \emph{The players: } 
 
$\Om$, the King. 

$A$, an abject $\DII$ peasant. 

The king's tax collector. 

\vsps

\emph{The scene: } A village.}

\end{minipage}
\medskip

%{\small All results in this act are due to  Figueira, Hirschfeldt, Miller, Ng and Nies \\ 
% (Logic and Computation, 2011). }
%      \colorlet{structure}{mystruct}
%     \beamersetaveragebackground{white!80!lightgray}

\n {Book VIII, CHAPTER IV:
How Sir Marhaus came out of Ireland for to ask truage of
Cornwall, or else he would fight therefore.}

\begin{quote}
(...) Then it befell that King Anguish of Ireland sent to Cornwall for his truage\footnote{tribute}, that Cornwall had paid many winters. 
And all that time Cornwall was behind of the truage for seven
years.  And they  gave unto the messenger of
Ireland these words and answer, that they would none pay; and
bade the messenger go unto his King Anguish, and tell him we will
pay him no truage.  (...)  \end{quote}

\subsection*{Cost functions} 

 Suppose the King issues a tax law. This is a computable function $\cost\colon \NN \times \NN \to \QQ^+$ that is nondecreasing in $s$, and nonincreasing in $x$. Consider a computable approximation $(A_s)\sN s$ of a $\DII$ peasant $A$. Suppose that on day $s$, the number $x$ is least such that $A_s(x)$ changes. Then the tax the peasant pays is $\cost(x,s)$. The established terminology for such a   tax law is ``cost function''. Cost functions were used in an ad-hoc way  in \citelow{Kucera.Terwijn:99,Downey.Hirschfeldt.ea:03,Nies:AM}. The general theory was developed    in~\cite[Section 5.3]{Nies:book}, and in more depth in \citelow{Greenberg.Nies:11,Nies:costfunctions}.   
\begin{definition}[\citelow{Nies:book}]
	We say a $\DII$ set $A$ \emph{obeys} a cost function $\cost$ if $A$ has a computable approximation such that the total tax is finite. 
\end{definition}

\vsps

Let $\cost^*(x) = \sup_s \cost(x,s)$. We say that a cost function $\cost$ has the \emph{limit condition} if $\lim_x \cost^*(x)= 0 $. Informally, this is a fair tax law. We show that one can obey each fair tax law without being taxed to death (where death = computable). This result has roots in the work of \Kuc\ and Terwijn~\citelow{Kucera.Terwijn:99} who built an incomputable low-for-random set. Downey et al.\ \citelow{Downey.Hirschfeldt.ea:03} gave a construction like this for the  particular cost function
\begin{equation*}
	\label{eqn:stcf} \cost(x,s) = \sum_{ w =x+1 }^s 2^{-K_s(w)} 
\end{equation*}
in order to build an incomputable $K$-trivial set (see below). In full generality, the construction was first stated in \cite[Thm.\ 5.3.5]{Nies:book}. 

\begin{proposition} \label{prop:Existence}
	Suppose a cost function $\cost$ has the limit condition. Then there is a promptly simple set $A$ obeying $\cost$. 
\end{proposition}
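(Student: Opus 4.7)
The plan is a standard cost-function construction enumerating $A$ to satisfy prompt-simplicity requirements while paying only a finite total tax.

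\textbf{Setup.} For each $e$ I introduce a requirement $P_e$: if $W_e$ is infinite, then at some stage $s$ at which a new element $x$ enters $W_e$, the construction enumerates that same $x$ into $A$. This is the defining feature of prompt simplicity. To ensure $\bar A$ is infinite, I will never enumerate $x \le 2e$ on behalf of $P_e$. Because $\cost$ has the limit condition, for every $e$ I can pick a threshold $n_e$ so that $\cost^*(x) < 2^{-e-1}$ for all $x \ge n_e$; enlarge $n_e$ if necessary so that $n_e > 2e$.

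\textbf{Construction.} At stage $s$, look for a pair $(e,x)$ with $e<s$, $P_e$ not yet declared satisfied, $x \in W_{e,s}\setminus W_{e,s-1}$, $x > 2e$, and $\cost(x,s) < 2^{-e-1}$. For the least such $e$ (and then least such $x$), enumerate $x$ into $A_s$ and mark $P_e$ permanently satisfied. Since $\cost$ is nondecreasing in $s$, the condition $\cost(x,s)<2^{-e-1}$ will certainly hold once $x\ge n_e$, because then $\cost(x,s)\le \cost^*(x) < 2^{-e-1}$.

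\textbf{Verification.} Each $P_e$ triggers at most one enumeration, contributing at most $2^{-e-1}$ to the total tax; hence the total cost paid by the computable approximation $(A_s)$ is at most $\sum_e 2^{-e-1}=1/2$, so $A$ obeys $\cost$. For prompt simplicity, suppose $W_e$ is infinite. Then infinitely many $x \ge n_e$ eventually enter $W_e$; at the stage $s$ when such an $x$ first appears, either $P_e$ has already acted (so some earlier new element of $W_e$ was put into $A_s$ at its stage of appearance), or $x$ itself satisfies the search condition and is enumerated into $A_s$ at stage $s$. Either way $A$ meets $W_e$ promptly. Coinfiniteness follows from the restriction $x>2e$: at most $e+1$ numbers below $2e+2$ are enumerated, so $\bar A$ is infinite.

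\textbf{Main obstacle.} The only delicate point is reconciling \emph{promptness} with the cost bound: we must enumerate $x$ at exactly the stage $x$ appears in $W_e$, not later. The monotonicity of $\cost$ in $s$, together with the limit condition, is precisely what permits this — once $x$ is large enough that $\cost^*(x)<2^{-e-1}$, the cheap enumeration is available immediately at the appearance stage, so no waiting is needed.
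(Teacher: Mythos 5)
Your construction is essentially identical to the paper's: one-shot prompt simplicity requirements acting on fresh elements $x>2e$ of $W_e$ only when $\cost(x,s)$ is below a geometric threshold, with the limit condition supplying (non-uniformly, and only in the verification) a bound $n_e$ past which the cost test always passes, and the sum $\sum_e 2^{-e-1}$ giving obedience. The one wrinkle is that you act only for the \emph{least} eligible $e$ at each stage, so your claim that $x$ is enumerated at the very stage it appears is not literally immediate---a smaller-indexed requirement might be served instead---but since each requirement acts at most once this costs only finitely many of the infinitely many opportunities, and the argument still goes through (the paper sidesteps the issue by letting every eligible $e<s$ act at stage $s$).
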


\begin{proof}
	We meet the usual prompt simplicity requirements \bc $PS_e$: \ $ |W_e| =\infty \RRA \exo s \exo x [ x \in W_{e,s } - W_{e,s-1} \lland x \in A_{s}]. $ \ec We define a computable enumeration $\seq{A_s}\sN{s}$ as follows. Let $A_0 = \ES$. At stage $s>0$, for each $e < s$, if $PS_e$ has not been met so far and there is $x \ge 2e$ such that $x \in W_{e, s} - W_{e,s-1} $ and $ \cost(x,s) \le 2^{-e}$, put~$x$ into $A_s$. Declare $PS_e$ to be met.
	
	\vsps
	
	\n Note that $\seq{A_s}\sN{s}$ obeys $ \cost$, since at most one number is put into~$A$ for the sake of each requirement. Thus the total tax the peasant $A$ pays is bounded by $ \sum_e \tp{-e} =2$.
	
	If $W_e$ is infinite, there is an $x\ge 2e$ in $W_e$ such that $ \cost(x,s) \le 2^{-e}$ for all $s>x$, because~$\cost$ satisfies the limit condition. We enumerate such an $x$ into~$A$ at the stage $s>x$ where~$x$ appears in $W_e$, if $PS_e$ has not been met yet by stage~$s$. Thus~$A$ is promptly simple. 
\end{proof}
In the traditional  interpretation (such as \citelow{Soare:87}), being promptly simple would mean that the set changes quickly. So it seems the result says that a set can change quickly in that traditional sense, yet change little in the sense of  the cost function. There is no contradiction because actually, $A$ only has to change quickly \emph{once} for each infinite c.e.\ set $W_e$. This is possible even if the global amount of changes is small. 

We also note that the actual  amount of tax paid is immaterial as long as it is finite: we can always modify the computable approximation   so that the tax becomes arbitrarily small. Thus, a single cost function only distinguishes between sets that change little, and sets that change a lot. Later on,  we will also consider classes of cost function. Jointly obeying each cost function in such a class yields a finer way to gauge the amount of changes.

When studying obedience to a single cost function, we can focus on the c.e.\ sets. 
\begin{proposition}
	[\citelow{Nies:book}, Prop.\ 5.3.6] Suppose a $\DII$ set $A$ obeys a cost function~$\cost$. Then there is a computably enumerable set $D \ge_{T} A$ such that $D$ also obeys $\cost$. If $A$ is $\omega$-c.a., then we can in fact achieve that  $D \ge_{tt} A$. 
\end{proposition}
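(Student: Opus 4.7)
\medskip
\noindent\textbf{Proof plan.} My plan is to encode the changes of a well-chosen computable approximation of $A$ into a c.e.\ set $D$ via \emph{change markers}. First I would fix a computable approximation $(A_s)\sN s$ witnessing that $A$ obeys $\cost$, and denote its total tax by some finite $c$. Using a computable, injective pairing $\la n, k \ra$ satisfying $\la n, k \ra \ge n$, I would define $D$ by enumerating $\la n, k\ra$ at stage $s$ exactly when $A_s(n) \ne A_{s-1}(n)$ and this is the $k$-th such flip at position $n$. By construction $D$ is c.e., and the set $\{k \colon \la n, k\ra \in D\}$ is the initial segment $\{0,1,\ldots,\ell_n - 1\}$, where $\ell_n$ denotes the total number of changes of $A$ at position $n$.

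\medskip
\noindent To obtain $A \leT D$, a $D$-oracle procedure would search for the least $k$ with $\la n, k\ra \notin D$; the search halts because $\ell_n < \infty$, and one recovers $A(n)$ from $A_0(n)$ and the parity of $\ell_n$. For the $\omega$-c.a.\ strengthening, I would pick a computable bound $g(n)$ on $\ell_n$. The tt-procedure then queries $D$ on the $g(n)+1$ markers $\la n, 0\ra, \ldots, \la n, g(n)\ra$, reads off $\ell_n$ as the length of the initial segment of ``yes'' answers, and outputs $A_0(n) \oplus (\ell_n \bmod 2)$. Since the number of queries is computably bounded in $n$, this yields $A \ltt D$.

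\medskip
\noindent The main thing to check is that $D$ obeys $\cost$. At each stage $s$, let $n_1, n_2, \ldots$ list the positions where $A_s$ differs from $A_{s-1}$, and let $k_i$ be the prior change-count at $n_i$. The least element enumerated into $D$ at stage $s$ is $m_s = \min_i \la n_i, k_i \ra$; since $\la n, k\ra \ge n$ and $\cost$ is nonincreasing in its first argument, $\cost(m_s, s) \le \cost(\min_i n_i, s)$, which equals the tax $A$ pays at stage $s$. Summing over all $s$, the total $D$-tax is at most $c$, so $D$ obeys $\cost$. The only potential obstacle I anticipate is the per-stage bookkeeping of the marker index $k_i$, but this is just a computable counter and presents no real difficulty.
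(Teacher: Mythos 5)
Your change-set construction is, for the first claim, exactly the standard argument behind the cited result (the paper itself gives no proof, deferring to \cite[5.3.6]{Nies:book}): enumerate a marker $\la n,k\ra \ge n$ for the $k$-th flip of $A(n)$, recover $A(n)$ from $A_0(n)$ and the parity of the number of markers in column $n$, and note that the least marker entering $D$ at a stage is at least the least position at which $A$ changes, so $D$'s tax is dominated stage by stage by $A$'s tax. That part is correct and complete.

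The tt-clause, however, has a genuine gap. You fix at the outset a single approximation $(A_s)\sN s$ --- the one witnessing obedience to $\cost$ --- and let $\ell_n$ be \emph{its} number of changes at position $n$; later you ``pick a computable bound $g(n)$ on $\ell_n$''. But $\omega$-c.a.-ness of $A$ only supplies a computable change bound for \emph{some} computable approximation of $A$, not for the particular one you chose to witness obedience. These need not coincide: an approximation can obey $\cost$ while flipping $A_s(n)$ an arbitrarily large (finite) number of times at stages $s$ where $\cost(n,s)$ is zero or tiny, so obedience alone imposes no computable bound on $\ell_n$. What is needed is a merging lemma: if $A$ is $\omega$-c.a.\ and obeys $\cost$, then one computable approximation witnesses both properties simultaneously. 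This is true but not a one-liner --- the obvious merge (withhold a change of $A_s(n)$ until the bounded-change approximation confirms it) pushes changes to later stages, and since $\cost(x,s)$ is nondecreasing in $s$ the delayed changes are more expensive, so finiteness of the total tax of the merged approximation is exactly what has to be argued. Either prove or cite that lemma; as written, the step from ``$A$ is $\omega$-c.a.'' to ``$\ell_n \le g(n)$'' does not follow, and the tt-reduction is unjustified.
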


%\subsection*{A cost function characterizing $K$-triviality}
Recall that $K(x)$ denotes the prefix-free complexity of a string $x$ (see e.g.\ \cite[Ch.\ 2]{Nies:book}, or \citelow{Downey.Hirschfeldt:book}). The Levin-Schnorr theorem characterizes ML-randomness of $Z$ via having an  initial segment complexity $K(Z \uhr n)$ of about~$n$, which is near the upper bound  (see e.g.\ \cite[3.2.9]{Nies:book}). Recall  that  a set $A$ is \emph{$K$-trivial} if for some $b$, $\fao n K(A\uhr n) \le K(n)+b$. Since $K(n) \le 2 \log n  + O(1)$ is the lower bound, this means that $A$ is far from random. 

The following characterizes $K$-triviality among peasants by obedience to the King's tax law $\cost^\Om$,  defined by $\cost^\Om(x,s) = \Om_s - \Om_x$.  This is the amount $\Om$ increases from $x$ to $s$. Note that  $\cost^\Om$  actually depends on a particular computable approximation of  $\Om$ as a left-c.e.\ real.
\begin{theorem}
	[\citelow{Nies:AM},\citelow{Nies:costfunctions}] \label{thm:Char_K} $A$ is $K$-trivial $\LR$ $A$ obeys $\cost^\Om$. 
\end{theorem}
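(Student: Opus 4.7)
My strategy is to pass through the standard ``Solovay'' cost function for $K$-triviality, $\cost^K(x,s) := \sum_{x < w \le s} 2^{-K_s(w)}$, for which the classical theorem (Downey, Hirschfeldt, Nies, Stephan, as used in the incomputable $K$-trivial construction cited just before the statement) already gives that $A$ is $K$-trivial iff $A$ obeys $\cost^K$. The theorem then reduces to checking that $\cost^\Om$-obedience and $\cost^K$-obedience pick out the same class of $\DII$ sets.

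For the direction ``$A$ obeys $\cost^\Om \Rightarrow A$ is $K$-trivial'', I would apply the Kraft--Chaitin machine existence theorem directly. Given a computable approximation $(A_s)\sN s$ of $A$ with finite total $\cost^\Om$-cost, at each stage $s$ and each $n$ enqueue the request $\langle K_s(n)+d,\, A_s\uhr n\rangle$ whenever either $K_s(n)$ has just strictly decreased, or $A_s\uhr n$ has just changed. The weight from $K_s(n)$-decreases, summed over all $n$ and $s$, is at most $O\bigl(2^{-d}\sum_n 2^{-K(n)}\bigr) = O(\Om) = O(1)$. A change of $A$ at least position $x$ at stage $s$ contributes $\sum_{n>x} 2^{-K_s(n)-d}$, which is not literally dominated by $\cost^\Om(x,s)$. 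I would fix this by re-timing: defer each enumeration of $x$ into $A$ from stage $s$ to the first later stage $t$ at which $\sum_{n>x} 2^{-K_t(n)} \le 2(\Om_t - \Om_x)$. Such a $t$ exists because the tail $\sum_{n>x} 2^{-K(n)}$ is a left-c.e.\ real, hence Solovay-reducible to $\Om$ and eventually dominated by a constant multiple of $\Om - \Om_x$. The delayed approximation still witnesses $A$, still has finite total $\cost^\Om$-cost up to a constant factor, and the KC weight from change-requests is then $O(1)$.

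For the direction ``$A$ is $K$-trivial $\Rightarrow A$ obeys $\cost^\Om$'', I would take the approximation of $A$ produced by the decanter/golden-run construction witnessing $\cost^K$-obedience, and redo the accounting with $\cost^\Om$ playing the role of $\cost^K$. The argument goes through because $\cost^\Om$ satisfies the limit condition $\cost^{\Om\,*}(x) = \Om - \Om_x \to 0$, and the decanter argument is robust enough to handle any cost function with the limit condition: the ``mass flushed'' at each change of $A$ below $x$ is bounded by the corresponding cost-function increment, whose total over stages is finite.

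The main obstacle is the re-timing step in the ($\Leftarrow$) direction, which hinges on $\Om$ being heaviest among left-c.e.\ reals: the left-c.e.\ tail $\sum_{n>x} 2^{-K(n)}$ must be matched by an $\Om$-increment above $\Om_x$ with only a bounded multiplicative overhead, uniformly in $x$ and in the stage of enumeration. Once this Solovay-style comparison is put in place, both directions become a matter of routine bookkeeping against the machine existence theorem.
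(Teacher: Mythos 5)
The paper itself does not prove this theorem; it only cites \citelow{Nies:AM} and \citelow{Nies:costfunctions}, noting that `$\LA$' is easy while `$\RA$' needs the golden run method (originally carried out for the cost function $\cost_{\mathcal K}(x,s)=\sum_{w=x+1}^{s}2^{-K_s(w)}$). Your `$\LA$' direction is essentially sound but over-engineered. Under the standard stage convention that $\UM_s(\sss)=n$ implies $n\le s$, every $\UM$-description of a number $n>x$ converges after stage $x$, so $\sum_{x<n\le s}2^{-K_s(n)}\le \Om_s-\Om_x=\cost^\Om(x,s)$ outright; equivalently $\cost_{\mathcal K}(x,s)\le\cost^\Om(x,s)$ pointwise, so obeying $\cost^\Om$ implies obeying $\cost_{\mathcal K}$ and hence $K$-triviality by the known characterization. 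Your ``main obstacle'' is therefore illusory, and your re-timing fix is actually the one dangerous step you propose: delaying a change from stage $s$ to stage $t>s$ raises the tax paid from $\Om_s-\Om_x$ to $\Om_t-\Om_x$, and nothing in your argument shows the sum of these inflated costs stays finite. Drop the re-timing and the direction is fine.

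The genuine gap is in `$\RA$'. You justify it by asserting that the decanter/golden-run argument ``is robust enough to handle any cost function with the limit condition.'' That principle is false, and the paper itself contains the refutation: benign cost functions have the limit condition, a c.e.\ set obeys every benign cost function iff it is strongly jump traceable (Theorem~\ref{thm:NGbenign}), and by Cholak, Downey and Greenberg some c.e.\ $K$-trivial set is \emph{not} strongly jump traceable --- hence some $K$-trivial set fails to obey some limit-condition cost function. So the limit condition cannot be what makes the argument work for $\cost^\Om$. Since $\cost^\Om\ge\cost_{\mathcal K}$ pointwise but not conversely (a large amount of measure can converge after stage $x$ describing numbers $\le x$, inflating $\Om_s-\Om_x$ without affecting $\cost_{\mathcal K}(x,s)$), obedience to $\cost^\Om$ is strictly more demanding than obedience to $\cost_{\mathcal K}$, and your reduction to the classical $\cost_{\mathcal K}$ theorem leaves exactly the hard half unproved. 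What the real proof must exploit is the specific nature of $\cost^\Om$: its increments are increments in the measure of the domain of $\UM$, which is the currency the golden run trades in when it converts a putative expensive approximation of $A$ into short descriptions contradicting (or exploiting) $K$-triviality. As written, your `$\RA$' direction rests on an unsound lemma and does not go through.
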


The implication `$\LA$' is not hard. The implication `$\RA$' is also not very hard for a c.e.\ set~$A$, but needs the full power of the so-called golden run method of \citelow{Nies:AM} in the case of a general $\DII$ set $A$. (The proof in \citelow{Nies:AM} was for the cost function $\cost_{\mathcal K}$.) 
\begin{corollary}
	Every $K$-trivial set is Turing below a computably enumerable $K$-trivial set. 
\end{corollary}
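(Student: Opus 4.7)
The plan is to combine the two previously stated results in a single short chain: the cost-function characterization of $K$-triviality (Theorem~\ref{thm:Char_K}) and the proposition that obedience to a cost function can be transferred from an arbitrary $\DII$ set to a c.e.\ set above it (Prop.\ 5.3.6 of \citelow{Nies:book}).

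First, given a $K$-trivial set $A$, I would invoke the direction `$\RA$' of Theorem~\ref{thm:Char_K} to conclude that $A$ obeys the cost function $\cost^\Om(x,s) = \Om_s - \Om_x$. This is the only genuinely difficult ingredient, but it is cited from the literature (the golden run method of \citelow{Nies:AM}), so nothing new needs to be done here. Next, I would apply the transfer proposition from~\citelow{Nies:book} to the cost function $\cost^\Om$: this yields a computably enumerable set $D$ with $D \ge_T A$ such that $D$ also obeys $\cost^\Om$.

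Finally, I would apply the direction `$\LA$' of Theorem~\ref{thm:Char_K} to $D$: since $D$ obeys $\cost^\Om$, the set $D$ is $K$-trivial. Thus $D$ is a c.e.\ $K$-trivial set with $A \leT D$, as required.

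The only subtle point worth flagging is that Theorem~\ref{thm:Char_K} and the cost function $\cost^\Om$ are defined with reference to a fixed left-c.e.\ approximation of $\Om$; the same approximation must be used in both applications of the theorem, but this is automatic once it has been chosen. No real obstacle arises, since all the work has been pushed into the two quoted results. The corollary is essentially a statement that the class of $K$-trivials has c.e.\ cofinal sets under $\leT$, and this cofinality is an immediate dividend of the cost-function viewpoint.
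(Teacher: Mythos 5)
Your proposal is correct and is exactly the argument the paper intends: the corollary is stated immediately after Theorem~\ref{thm:Char_K} and the transfer proposition precisely so that it follows by the chain $K$-trivial $\Rightarrow$ obeys $\cost^\Om$ $\Rightarrow$ some c.e.\ $D \ge_T A$ obeys $\cost^\Om$ $\Rightarrow$ $D$ is $K$-trivial. No divergence from the paper's route.
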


Recall the main principle from Act 1: for a ML-random $\DII$ set $Z$. 

\bc $Z$ is computationally less complex $\LLR$ $Z$ needs more changes. \ec \n For c.e.\ sets $A$, we propose a principle that is antipodal to  the one for random $\DII$ sets:

\vsp

\fbox{$A$ is computationally less complex $\LLR$ $A$ obeys stricter cost functions.} 

\vsp

Thus, for c.e.\ sets, being less complex means changing less. We give evidence for this principle, in fact also in the case of left-c.e.\ sets. Similar to Act~1, we proceed from sets of high complexity to sets of low complexity, and see that this complexity matches their changes in the predicted way. We first show that the King pays no taxes. Thereafter we see that peasants get poorer and poorer as they obey stricter and stricter tax laws.

\vsp

\n \emph{Evidence 1.} The left-c.e.\ set $\Omega$ is Turing complete. It obeys no cost function of any reasonable strength, by the following observation. 
\begin{proposition}
	If $\cost$ is a cost function with $\cost(x,s) \ge \tp{-x}$ for all $x,s$, then no \ML\ random $\DII$ set $Z$ obeys $\cost$. 
\end{proposition}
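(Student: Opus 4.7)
The plan is to derive a contradiction by constructing, from a hypothetical obedient computable approximation, a Solovay test that has infinitely many initial segments of $Z$ among its members. Suppose for contradiction that $Z$ obeys $\cost$ via the computable approximation $(Z_s)\sN s$. At each stage $s$ where $Z_s \neq Z_{s-1}$, let $L(s)$ be the least $x$ with $Z_s(x) \neq Z_{s-1}(x)$, and set $\tau_s := Z_s \uhr{L(s)+1}$. From $\cost(x,s) \ge 2^{-x}$ and finite total tax we obtain
\[
\sum_s 2^{-|\tau_s|} \;=\; \sum_s 2^{-L(s)-1} \;\le\; \sum_s \cost(L(s),s) \;<\; \infty,
\]
so the computable sequence $(\tau_s)$, ranging over change-stages, is a Solovay test.

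The key step is to show that $\tau_s \prec Z$ for infinitely many $s$. For each $N$, let $s^*(N)$ be the last stage $s$ with $L(s) < N$. Because $Z \uhr{N}$ stabilizes under the approximation, only finitely many stages have $L(s) < N$, so $s^*(N)$ is finite; because $Z$ is \ML\ random and hence not computable, changes keep happening at arbitrarily large positions, forcing $s^*(N) \to \infty$ as $N$ grows. By the choice of $s^*(N)$, no position below $N$ changes after stage $s^*(N)$, hence $Z_{s^*(N)} \uhr{N} = Z \uhr{N}$; since $L(s^*(N))+1 \le N$, this yields $\tau_{s^*(N)} \prec Z$.

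This produces infinitely many indices $s$ with $\tau_s \prec Z$, contradicting the Solovay-test characterization of \ML\ randomness (Definition~\ref{def:MLR}). The only subtle point I anticipate is verifying that $s^*(N)$ really tends to infinity, so that we genuinely obtain infinitely many test-indices witnessing the cover rather than a bounded list; this is handled by the observation that $s^*(N)$ is nondecreasing in $N$ and that a uniform bound on it would force all changes to cease past some stage, making $Z$ computable.
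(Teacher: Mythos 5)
Your proof is correct and follows essentially the same route as the paper's: turn each change-stage into the string $Z_s\uhr{p+1}$ where $p$ is the least changed position, bound $\sum 2^{-|\tau_s|}$ by the (finite) total cost, and observe that the strings added at the last stages affecting each prefix length are genuine initial segments of $Z$, yielding a failed Solovay test. The only difference is that you spell out the ``infinitely many $\tau_s \prec Z$'' step (via $s^*(N)\to\infty$), which the paper leaves implicit.
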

\begin{proof}
	We view $Z_s$ as a binary string. At stage $s>0$, if there is a least~$p$ such that $Z_s(p) \neq Z_{s-1}(p)$, we add the string $Z_s\uhr{p+1}$ to  an effective  list of strings $(\sss_i)\sN i$ as in Definition~\ref{def:MLR}. If $Z$ obeys $\cost$ via $\seq {Z_s}\sN s $, then  $\sum_i \tp{-|\sss_i|} < \infty$. Since  $\sss_i \prec Z$ for infinitely many $i$, $Z$ is not ML-random. 
\end{proof}

\n \emph{Evidence 2.} Bickford and Mills \citelow{Bickford.Mills:nd} studied sets   $A$  such that  $A' \ltt \ES'$.  They called these sets \emph{abject}. They mainly studied this property for c.e.\ sets. Mohrherr \citelow{Mohrherr:86} introduced the term ``superlow'' for this property and also provided results outside the c.e.\ sets.

 The following was first proved using the so-called   golden run method. 
\begin{theorem}
	[\citelow{Nies:AM}] Each $K$-trivial set is superlow. Thus, obeying $\cost^\Om$ implies superlowness. 
\end{theorem}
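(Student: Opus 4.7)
By Theorem~\ref{thm:Char_K}, $K$-triviality is equivalent to obedience to $\cost^\Om$, so the ``Thus'' clause of the statement will follow immediately once the first sentence is established. So fix a $K$-trivial set $A$ with constant $b$, and let $(A_s)\sN s$ be a computable approximation of $A$ witnessing obedience to $\cost^\Om$; i.e., $\sum_s (\Om_s - \Om_{x_s}) < \infty$, where $x_s$ is the least position at which $A_{s-1}$ and $A_s$ disagree (at stages when they disagree). The goal is to provide a truth-table reduction $A' \ltt \Halt$, equivalently a computable approximation $(J_s)\sN s$ of the jump $J^A$ together with a computable function $h$ such that $J_s(e)$ changes at most $h(e)$ times.

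I would use the obvious approximation $J_s(e) = 1$ iff $\Phi_{e,s}^{A_s}(e)\!\downarrow$. A $1\to 0$ switch at stage $s$ occurs precisely because $A$ changed below the use $u = \varphi_{e,s-1}^{A_{s-1}}(e)$ of the surviving computation; a $0\to 1$ switch is just a first convergence. Hence it suffices to bound the $1\to 0$ switches by a computable function $h(e)$, as these dominate the total mind changes up to a factor of~$2$. The heart of the argument is therefore: whenever an $A$-change below $u$ kills a computation of index $e$, how much must $A$ ``pay''?

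I would then set up the golden run construction. Running in parallel over $e$, define a procedure $P_e$ that watches $\Phi_{e,s}^{A_s}(e)$; each time the computation, alive with use $u$, is killed at stage $s$ by an $A$-change at $x_s < u$, $P_e$ calls a sub-procedure that, via a Kraft--Chaitin / bounded request set, issues short descriptions of $A_s\uhr u$ exploiting the bound $K(A_s\uhr u) \le K(u) + b$ given by $K$-triviality. The weights requested by $P_e$ are designed so that their total is controlled by $\sum_s \cost^\Om(x_s, s)$, which is finite, times a factor depending only on $e$. One then argues that some level---the ``golden'' run at level $e$---must succeed without being interrupted from above, and its total request-weight gives, via the Kraft inequality applied to $\UM$, a computable upper bound $h(e)$ on the number of kills admitted at index $e$.

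The main obstacle, and the truly delicate step, is the combinatorial bookkeeping for the nested procedures: one must verify that at each level the requested weight remains summable and that, when the golden run is identified, the number of computations it kills is bounded effectively in $e$ and~$b$. This accounting---translating a finite $\cost^\Om$-budget and the $K$-triviality constant into a single computable mind-change bound $h(e)$---is exactly what the golden run method in \citelow{Nies:AM} was designed to handle, and everything else in the proof is bookkeeping or direct appeals to the Machine Existence Theorem.
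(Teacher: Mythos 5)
First, a point of reference: the paper itself contains no proof of this theorem --- it is quoted from \citelow{Nies:AM} with only the remark that it was first proved by the golden run method --- so there is no in-paper argument to compare against. Your proposal correctly identifies the right target (a computable bound $h(e)$ on the mind changes of the approximation $J_s(e)=[\Phi^{A_s}_{e,s}(e)\!\downarrow]$, which yields $A'\ltt\Halt$, with the $1\to 0$ switches dominating) and the right engine (the golden run accounting). But as a proof it has a genuine gap: the entire substantive content --- how a fixed measure budget is converted into a computable bound on the number of times an $A$-change can destroy a computation, given that one cannot afford to spend measure on every relevant length --- is precisely the part you explicitly defer to the cited paper. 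The nesting of procedures, the garbage quotas, and the existence of a golden (uncancelled, non-returning) run are not ``bookkeeping''; they are the theorem, and no sketch of how that accounting actually closes is given.

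Two places where the sketch also goes wrong in substance. (i) The direction of the Kraft--Chaitin argument is inverted: the construction does not ``issue short descriptions of $A_s\uhr u$'' --- if it could do that, $K$-triviality would be irrelevant. Rather, the construction spends its own weight to make $K(u)$ small for selected $u$; the hypothesis $K(A\uhr u)\le K(u)+b$ then forces the opponent $\UM$ to spend at least $2^{-K(u)-b}$ on a description of the current $A\uhr u$, afresh each time $A\uhr u$ changes, and the bound $h(e)$ comes from counting the \emph{opponent's} measure, not your own request weight. (ii) Starting from a computable approximation of $A$ that obeys $\cost^\Om$ presupposes the hard (`$\RA$') direction of Theorem~\ref{thm:Char_K}, which the paper notes itself requires the golden run; moreover the standard argument works directly from the $K$-triviality constant $b$ and never uses such an approximation, so this hypothesis does no work. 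The only part that stands as written is the ``Thus'' clause, which indeed follows from the first sentence together with the easy (`$\LA$') direction of Theorem~\ref{thm:Char_K}.
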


\n \emph{Evidence 3.} Let $J^A$ be a universal partial computable functional with oracle~$A$. Strong jump traceability, introduced in~\citelow{Figueira.ea:08}, is a lowness property of a set~$A$ saying that the possible values of~$J^A$ are very limited: if $J^A(x) $ is defined at all, then it is contained in a tiny c.e.\ set $T_x$ obtained uniformly from $x$. In ~\citelow{Figueira.ea:08} a c.e.\ but incomputable strongly jump traceable set was built.  Cholak et al.\ \citelow{Cholak.Downey.ea:08} showed among other things that some c.e.\ $K$-trivial set is not strongly jump traceable. In later papers such as \citelow{Downey.Greenberg:nd,Greenberg.Hirschfeldt.ea:nd},  strong jump traceability was studied in great depth.  For general background, see   Section 10.13 of  the excellent book~\citelow{Downey.Hirschfeldt:book}, and also Section 8.5 of~\citelow{Nies:book}. 

A cost function $\cost$ is called \emph{benign}~\citelow{Greenberg.Nies:11} if one can bound computably in~$k$ the number of pairwise disjoint intervals $[x,s)$ with increments $\cost(x,s) \ge \tp{-k}$. For instance, the cost function $\cost^\Om$ used in Theorem~\ref{thm:Char_K} is benign via the bound $k \to 2^k$. Clearly, benignity implies the limit condition.
\begin{theorem}
	[\citelow{Greenberg.Nies:11}] \label{thm:NGbenign} Let $A$ be c.e. Then 
	\bc $A$ is strongly jump traceable $\LR$ $A$ obeys each benign cost function.  \ec
\end{theorem}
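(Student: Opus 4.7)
The plan is to prove both directions by relating $h$-bounded jump traces of $A$ to obedience of carefully designed benign cost functions, so that the two properties are translatable into each other.

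\textbf{Direction $(\Leftarrow)$.} Assume $A$ obeys every benign cost function. Given an order function $h$, I would construct a benign cost function $\cost_h$ so that any enumeration of $A$ obeying $\cost_h$ admits a greedy $h$-bounded trace for $J^A$. The cost function is designed to charge $A$ whenever a change threatens to introduce a new element into the would-be trace $T_y$ beyond the quota $h(y)$; summability of the total tax then forces $|T_y| \le h(y)$ at every $y$. Benignity of $\cost_h$ must be verified in advance, independent of $A$, using a computable indexing of all potential oracle--use pairs $(\sigma,u)$ with $J^\sigma(y)\!\downarrow$ together with the order bound $h$.

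\textbf{Direction $(\Rightarrow)$.} Fix a benign cost function $\cost$ with computable bound $g(k)$. I would design a partial $A$-computable functional $\Psi^A$ whose value at $k$ encodes, with a witness stage, the currently active list of positions carrying open $2^{-k}$-increments of $\cost$; benignity caps this list at length $g(k)$. Invoking strong jump traceability with a slow-enough order chosen against $g$ produces a trace $(T_k)$ of size $h(k)$ for $\Psi^A$. From this trace one extracts a new computable enumeration $(\tilde A_s)\sN s$ of $A$ by committing, at each level $k$, to one of the at most $h(k)$ scenarios recorded in $T_k$; the benign bound combined with the trace size controls the total cost of $(\tilde A_s)$ as a finite sum.

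\textbf{Main obstacle.} The serious work lies in $(\Rightarrow)$: one must arrange $\Psi^A$ so that its values truly encode the relevant enumeration invariants of $A$ with respect to $\cost$, the finitely many local scenarios at each level $k$ must be stitchable into a single coherent global enumeration, and this enumeration must remain computably enumerable. This is exactly the setting where the golden run technique of $\citelow{Nies:AM}$, in its strong-jump-traceability refinement from $\citelow{Greenberg.Hirschfeldt.ea:nd}$, is indispensable: it non-uniformly identifies a ``good'' level at which the trace commitments across all higher levels are simultaneously honored, and one routes the construction through that good run to obtain the sought-for low-cost enumeration of $A$.
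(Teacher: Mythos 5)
First, note that the paper you are working from does not actually prove Theorem~\ref{thm:NGbenign}: it is quoted from \citelow{Greenberg.Nies:11}, so the comparison must be made against the proof there. Your outline has the correct top-level architecture --- one benign cost function per order function for the direction $\LA$, and traceability of an auxiliary functional plus a re-enumeration of $A$ for the direction $\RA$, which you correctly identify as the hard one --- but in both directions the actual mathematical content is announced rather than supplied, and in each there is a specific obstruction that your sketch does not confront. For $\LA$: a cost function must be a fixed computable function of $(x,s)$, monotone in each argument, defined \emph{before} and \emph{independently of} the approximation of $A$; a rule that ``charges $A$ whenever a change threatens to push $T_y$ past its quota'' is not such an object. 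Worse, the natural fix --- make every change below the use of a currently convergent computation $J^A(y)$ cost at least $1/h(y)$ --- is incompatible with benignity, because benignity forces the limit condition $\lim_x \cost^*(x)=0$ while the use of $J^A(y)$ is unbounded in $x$ for fixed $y$. Resolving this tension (by a delayed certification of computations, so that only suitably confirmed computations are traced and each later injury is genuinely charged) is essentially the whole content of that direction in \citelow{Greenberg.Nies:11}, and it is absent here.

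For $\RA$ the gap is larger. Your step ``commit, at each level $k$, to one of the at most $h(k)$ scenarios recorded in $T_k$'' is circular: $T_k$ is merely a c.e.\ set, so the true value of $\Psi^A(k)$ cannot be identified at any finite stage, and the new enumeration $(\tilde A_s)$ must nevertheless be computable and must enumerate exactly $A$. The honest version of your plan --- delay enumerating $x$ into $\tilde A$ until the corresponding change is certified by an element entering the trace, and charge each payment against a trace slot --- gives a total cost on the order of $\sum_k h(k)\,g(k)\,2^{-k}$, which need not converge for an arbitrary benign bound $g$ no matter how slowly $h$ grows at any single level; one needs a recursive accounting across levels in which successful certifications at one level ``promote'' work at the next. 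That recursive accounting is the box-promotion method (descended from Cholak--Downey--Greenberg), which is what \citelow{Greenberg.Nies:11} actually uses; it is not the golden run method of \citelow{Nies:AM}, which is the $K$-triviality tool, and invoking it by name does not substitute for specifying the functional $\Psi^A$, the promotion rule, and the cost estimate. As it stands, your text is a plausible roadmap to the known proof rather than a proof.
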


Together with Greenberg et al.\ \citelow{Greenberg.Hirschfeldt.ea:nd}, this shows that a c.e.\ set is strongly jump traceable iff it is below each $\omega$-c.a.\ ML-random set. This strengthens the result of Hirschfeldt and Miller from Act~1 that such a set can be incomputable.

Elaborating on Proposition~\ref{prop:Existence}, Franklin and Ng have shown that every benign cost function is obeyed by a strongly prompt set. 
\section*{Act 3: Computably enumerable sets below random $\DII$ sets}
%\scalebox{.5}{
%\includegraphics{../figures_background/forest.jpg}}
\begin{minipage}
	{12cm} {\it  \emph{The players:}
	
	$Z$, a raundon $\DII$-knight
	
	$A$, a  c.e.\ peasant.
	
	Village  people.
	
	\vsp
	
	\emph{The scene:}
	
	A forest between  village and  castle.}
\end{minipage}

\medskip 

\n {Book VI, Chapter X: How Sir Launcelot rode with a damosel and slew a knight
that distressed all ladies and also a villain that kept a bridge.}

\begin{quote} (...) And so Sir Launcelot and she departed.  And then he rode in a
deep forest two days and more, and had strait lodging.  So on the
third day he rode over a long bridge, and there stert upon him
suddenly a passing foul churl\footnote{archaic: a person of low birth; a peasant (OED)}, and he smote his horse on the nose
that he turned about, and asked him why he rode over that bridge
without his licence.  Why should I not ride this way? said Sir
Launcelot, I may not ride beside.  Thou shalt not choose, said
the churl, and lashed at him with a great club shod with iron. 
Then Sir Launcelot drew his sword and put the stroke aback, and
clave his head unto the paps.  At the end of the bridge was a
fair village, and all the people, men and women, cried on Sir
Launcelot, and said, A worse deed didst thou never for thyself (...)  \end{quote}

%\subsection*{Interaction between knights and peasants}
\n We now consider the situation that $A \le_T Z$, where $Z$ is a raundon $\DII$~knight, and $A$ is an incomputable c.e.\ peasant. We will see that \vsp

\bc \fbox{the more $Z$ is allowed to change, the less $A$ can change.} \ec

\vsp

The changes of $Z$ are quantified in the sense of initial segments (Act~1). The changes of $A$ are quantified by obeying cost functions (Act~2). This is in line with combining the main principles of these Acts: if $Z$ changes more then  $Z$ is computationally less complex. So the set $A \le_T Z$ is less complex as well, and hence can change less.

The situation above occurs by the following classical theorem of \Kuc\ which says that every raundon $\DII$ knight has  an incomputable  c.e.\ peasant as a  subject. 
\begin{theorem}
	[\citelow{Kucera:86}]  \label{thm:kuc} Let $Z$ be a random $\DII$ set. Then there is a c.e.\ incomputable set $A$ such that $A \le_T Z$. 
\end{theorem}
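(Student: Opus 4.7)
The plan is to use \Kuc's coding technique, which converts the positive measure of a $\PI 1$ class containing $Z$ into computational power. Since $Z$ is \ML\ random, a universal \ML\ test $(U_n)\sN n$ provides some constant $c$ with $Z \notin U_c$; setting $\P := \seqcantor \setminus U_c$ yields a $\PI 1$ class of measure at least $1 - 2^{-c}$ that contains $Z$. Fix a co-c.e.\ tree $T \sub \strcantor$ with $[T] = \P$ and a computable approximation $T_s \supseteq T$.

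Next, I would build simultaneously a c.e.\ set $A$ and a Turing functional $\Gamma$ with the invariant that $\Gamma^X = A$ for every $X \in \P$. Since $Z \in \P$ this will give $A = \Gamma^Z \leT Z$. Incomputability of $A$ comes from the usual diagonalization requirements $R_e \colon A \neq \varphi_e$, via followers $x_e$. For each $R_e$ allocate, at a depth $n_e$ that is strictly increasing in $e$, a pair of incomparable extensions $\sigma^0_e, \sigma^1_e$ of some node $\tau_e \in T$, both of length $n_e$ and both with $\mu(\P \cap [\sigma^i_e])>0$; such ``splitting nodes'' exist at arbitrarily deep levels because $\mu(\P)>0$. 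Declare axioms of $\Gamma$ that force $\Gamma^X(x_e) = i$ whenever $\sigma^i_e \prec X$. When $\varphi_{e,s}(x_e) \downarrow = i$ for the first time, commit: enumerate $x_e$ into $A$ exactly when $i = 0$, and ``prune'' $\P$ by excising the clopen $[\sigma^i_e]$ so that all surviving oracles return the correct value.

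The main obstacle is ensuring that $Z$ is never pruned away, that is, that $Z$ remains in the final $\PI 1$ subclass $\P_\infty$. By choosing $n_e$ large enough that $2^{-n_e} < 2^{-(c+d+e+2)}$, where $d$ is the universality constant of the test $(U_n)$, the total pruned measure across all requirements is at most $2^{-(c+d+1)}$. The union $V$ of all excised clopens is a single $\SI 1$ class of measure below $2^{-(c+d+1)}$, so by universality $V \sub U_{c+1} \sub U_c$; since $Z \notin U_c$ we conclude $Z \notin V$, hence $Z \in \P_\infty$. It follows that $\Gamma^Z = A$ and that $A$ is an incomputable c.e.\ set Turing below $Z$, as required.
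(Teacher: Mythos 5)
There is a genuine gap here, and the approach cannot be repaired in the form you give it, for a reason visible inside this very paper: your argument never uses that $Z$ is $\DII$. You invoke \ML{} randomness only to place $Z$ in $\P=\seqcantor\setminus U_c$ and, at the end, to conclude $Z\notin V$. If the argument were sound it would therefore show that \emph{every} \ML{} random set computes an incomputable c.e.\ set; but by the Hirschfeldt--Miller example quoted in Act~1, a weakly 2-random set forms a minimal pair with $\Halt$ and hence computes no incomputable c.e.\ set. Concretely, the flaw is in the coding step. Your axioms force $\Gamma^X(x_e)$ only on the two cones $[\sigma^0_e]$ and $[\sigma^1_e]$, and $Z$ need not pass through either of them; to get $\Gamma^Z=A$ you would have to arrange $\Gamma^X(x_e)=A(x_e)$ for \emph{all} $X$ in the surviving class $\P_\infty$. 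Since your pruning removes measure at most $2^{-(c+d+1)}$, $\P_\infty$ is a $\PPI$ class of positive measure each of whose members computes $A$, and then $A$ is computable by the Sacks/majority-vote theorem that $\leb(\{X\colon A\leT X\})>0$ forces $A$ computable. (Locally the same failure appears when $\varphi_e(x_e)$ diverges: the cone $[\sigma^1_e]$ permanently holds the value $1\neq 0=A(x_e)$ and no c.e.\ procedure can ever prune it.) \Kuc{} coding of this cone-splitting kind runs in the opposite direction --- it \emph{builds} a member of a positive-measure $\PPI$ class computing a prescribed set; it cannot extract an incomputable c.e.\ set from a prescribed member $Z$.

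The missing ingredient is the computable approximation $(Z_s)\sN s$ of $Z$. In the proof the paper points to (\Kuc's original argument, recast by Greenberg and Nies as obedience to a cost function $c_Z$), a number $x$ may enter $A$ at stage $s$ only when $Z_s\uhr{u}$ has just changed for a suitable use $u=u(x)$. Then $A\leT Z$ because $Z$ can compute the settling time of its own approximation and so decide membership in $A$; and \ML{} randomness of $Z$ --- via a Solovay test assembled from the configurations in which $Z$ would fail to change --- guarantees that enough change-stages occur to meet the incomputability (indeed prompt simplicity) requirements. Your measure bookkeeping with $U_c$ and the constant $d$ plays no role in that argument. Note also that the paper itself gives no proof of Theorem~\ref{thm:kuc}: it cites \Kuc{} and indicates only this cost-function route, so your proposal is both a different route and, as it stands, not a correct one.
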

Greenberg and Nies \citelow{Greenberg.Nies:11} have given a cost function proof of \Kuc's theorem: $A$ is a set obeying a certain cost function $c_Z$ associated with a computable approximation of $Z$. 

Unless $Z \ge_T \Halt$,   the  peasant $A$ in \Kuc{}'s theorem is quite obedient. That is, he is restricted in its amount of possible changes. This follows from a result of Hirschfeldt, Nies, and Stephan. 
\begin{theorem}
	[\citelow{Hirschfeldt.Nies.ea:07}] If $Z$ is Turing incomplete, then a set $A$ as in Theorem~\ref{thm:kuc} is necessarily $K$-trivial. 
\end{theorem}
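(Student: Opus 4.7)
The plan is to reduce the statement to the cost-function characterisation of $K$-triviality (Theorem~\ref{thm:Char_K}): it suffices to prove that every c.e.\ set $A\le_T Z$ obeys $\cost^\Omega$, given that $Z$ is a Turing incomplete \ML\ random $\DII$ set. Fix a Turing reduction $\Phi^Z=A$, together with computable approximations $(A_s)\sN s$ and $(Z_s)\sN s$, and let $u$ denote the (approximable) use function of~$\Phi$.

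The first step mirrors the Greenberg-Nies cost-function proof of \Kuc{}'s theorem. Associated with $(Z_s)$ I would define a ``change cost function'' $c_Z$ that records, for each pair $x<s$, the measure of initial-segment changes of $Z_t$ for $x<t\le s$ relevant to the use of~$\Phi$: at each stage $t$ where $Z_t\neq Z_{t-1}$, let $p_t$ be the first differing bit and contribute $\tp{-p_t}$ at the appropriate level. Every c.e.\ set Turing below $Z$ obeys $c_Z$: when a new element $x_s$ enters such a set, the current value of $\Phi^{Z_s}$ at $x_s$ has to be confirmed by a subsequent change in $Z$ below the use $u(x_s)$, and summing these contributions over all stages gives a finite total. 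In particular, $A$ obeys $c_Z$.

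The crux, and the main obstacle, is to convert obedience to $c_Z$ into obedience to $\cost^\Omega$ under the assumption $Z\not\ge_T\Halt$. I would argue by contraposition: if some c.e.\ set $A\le_T Z$ failed $\cost^\Omega$, then the $Z$-computable approximation of $A$, together with the divergent series $\sum_s(\Omega_s-\Omega_{x_s})$ along its enumeration, could be harnessed to locate arbitrarily precise approximations of $\Omega$ from~$Z$, yielding $Z\ge_T\Omega\equiv_T\Halt$, a contradiction. Making this extraction uniform---turning an ``eventual divergence'' statement into a bona fide Turing reduction from $\Omega$ to $Z$---is the delicate point, and is essentially the LR-reducibility / covering argument behind the original Hirschfeldt-Nies-Stephan proof. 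Once $A$ obeys $\cost^\Omega$, Theorem~\ref{thm:Char_K} gives the desired $K$-triviality.
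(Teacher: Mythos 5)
Your proposal does not close the gap. The paper itself gives no proof of this theorem (it is quoted from Hirschfeldt--Nies--Stephan), so the question is whether your argument stands on its own; it does not, for two reasons. First, your opening step asserts that \emph{every} c.e.\ set $A \leT Z$ obeys the change cost function $c_Z$. This reverses the direction of the Greenberg--Nies argument: their point is that \emph{obeying} $c_Z$ \emph{implies} $A \leT Z$, and the converse is false in general. Indeed, for $Z=\Om$ your $c_Z$ is essentially $\cost^\Om$, every c.e.\ set is Turing below $\Om$, and yet by Theorem~\ref{thm:Char_K} only the $K$-trivial ones obey $\cost^\Om$. So incompleteness of $Z$ must already be used in this step, but your justification never invokes it. Worse, the ``finite total'' you claim is contradicted by the paper's own Proposition in Act~2: for a \ML\ random $\DII$ set $Z$, \emph{every} computable approximation has $\sum_t \tp{-p_t}=\infty$ (otherwise the strings $Z_t\uhr{p_t+1}$ would form a Solovay test capturing $Z$), so your $c_Z$ does not even satisfy the limit condition. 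Finally, when $x$ enters $A$ at stage $s$ there is no reason that $\Phi^{Z_s}(x)$ currently outputs $0$, so no change of $Z$ below the use is forced at all; an arbitrary enumeration of $A$ incurs no controllable relationship to the changes of $Z$.

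Second, the step you yourself identify as the crux --- converting ``$A$ fails $\cost^\Om$'' into a Turing reduction of $\Om$ to $Z$ --- is exactly the substantive content of the theorem, and you give no construction for it, only the remark that it is ``essentially the argument behind the original proof.'' Note also that failing $\cost^\Om$ means that \emph{every} computable approximation of $A$ has infinite total cost, a universal statement from which extracting a single reduction is delicate; and your first step is never actually used in this second step, so it is dead weight. For the record, the route in the cited paper is different: one shows that if $A$ is c.e., $A\leT Z$, $Z$ is \ML\ random but not \ML\ random relative to $A$, then $Z$ computes $\Halt$; hence for incomplete $Z$ the set $A$ is a base for \ML\ randomness, and bases for \ML\ randomness are $K$-trivial (the ``hungry sets'' argument). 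None of that machinery, nor a substitute for it, appears in your proposal.
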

Depending on the $\DII$ knight $Z$, a c.e.\ peasant subject to $Z$ is can become arbitrarily obedient by a result of  Greenberg et al.~\citelow[Theorem 2.6]{Greenberg.Hirschfeldt.ea:nd}.   
\begin{theorem} \label{thm:P}
	Let $\mathcal P$ be a non-empty $\PPI$ class consisting only of ML-random sets. Let $\cost$ be a cost function with the limit condition. Then there is a $\DII$ set $Z\in \mathcal P$ such that every c.e.\ set $A \le_T Z$ obeys $\cost$. 
\end{theorem}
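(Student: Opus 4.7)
The plan is to build $Z \in \mathcal{P}$ as a $\DII$ limit of a computable sequence $(Z_s)\sN s$ of paths along a computable tree $T$ with $\mathcal{P} = [T]$. Because $\mathcal{P}$ is nonempty and consists entirely of \ML\ random reals, $\mathcal{P}$ has positive Lebesgue measure (a null $\PPI$ class cannot contain any \ML\ random real), so $T$ is \emph{wide}: $|T \cap \tp n| \geq \leb(\mathcal{P}) \cdot \tp n$ at every level $n$. This leaves ample room to reroute $Z$ inside $T$. Fix a computable approximation $T_s \searrow T$ from above, and maintain $Z_s$ as a designated path through $T_s$.

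For each pair of indices $(e,i)$, consider the requirement $\mathcal{R}_{e,i}$: if $W_e = \Gamma_i^Z$, then the natural \ce\ enumeration of $W_e$ should witness obedience of $W_e$ to $\cost$. Since $\cost(x,s)$ is nondecreasing in $s$, the \ce\ enumeration is the cheapest computable approximation, so it suffices to bound the total tax $\sum_{x \in W_e} \cost(x, s(x))$, where $s(x)$ is the stage at which $x$ first enters $W_e$. The strategy for $\mathcal{R}_{e,i}$ is: whenever a fresh $x$ enters $W_{e,s}$ at a stage $s$ where $\cost(x,s)$ exceeds the current budget allotted to the requirement and the computation $\Gamma_i^{Z_s}(x) = 1$ has use $u$, attempt to reroute $Z_{s+1}$ to a different string of length $u$ in $T_s$, destroying that particular computation. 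Either such reroutes ultimately produce $\Gamma_i^Z(x) \neq 1$ in the limit (making $\mathcal{R}_{e,i}$ vacuous because then $W_e \neq \Gamma_i^Z$), or only finitely many reroutes are ever initiated per cost threshold. The latter is guaranteed by the limit condition on $\cost$: only finitely many $x$ satisfy $\cost^*(x) \geq \tp{-k}$ for any fixed $k$.

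The main obstacle is the priority bookkeeping: simultaneous reroute demands by different requirements on a single $\DII$ path can conflict, and rerouting for one requirement may inadvertently reinstate a computation killed on behalf of another. I would address this by a finite-injury priority argument, assigning $\mathcal{R}_{e,i}$ a total budget of $\tp{-e-i}$, further subdivided across cost thresholds $\tp{-k}$, so the total tax paid across all requirements is bounded by $\sum_{e,i,k}\tp{-e-i-k} < \infty$. The width of $T$ ensures each reroute request is realizable inside $T_s$; combined with $T_s \searrow T$ and the finiteness of reroutes per threshold, the approximation $(Z_s)\sN s$ converges to a $\DII$ path $Z \in T = \mathcal{P}$ satisfying every $\mathcal{R}_{e,i}$.
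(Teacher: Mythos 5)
Note first that the paper does not prove this statement; it cites it as \cite[Theorem 2.6]{Greenberg.Hirschfeldt.ea:nd}, so the comparison below is with that proof.

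There is a genuine gap, and it sits at the centre of your argument: the reduction of $\mathcal R_{e,i}$ to bounding the tax $\sum_{x\in W_e}\cost(x,s(x))$ of the \emph{given} enumeration of $W_e$ is both unachievable and unnecessary. That quantity is determined entirely by the enumeration $(W_{e,s})$, which the opponent controls; no rerouting of $Z$ alters it. Worse, it can be infinite for sets that obey $\cost$ trivially. We may assume $\sum_x\cost^*(x)=\infty$ (otherwise every c.e.\ set obeys $\cost$ via any enumeration and the theorem is vacuous). Then one can choose an index $e$ enumerating $\NN$, one element per stage, slowly enough that the total tax of $(W_{e,s})$ diverges, and a total functional $\Gamma_i$ computing $\NN$ with use $0$. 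Now $W_e=\Gamma_i^Z=\NN$ for \emph{every} $Z$, no reroute ever destroys a computation, the sum you propose to bound is infinite, and yet $\NN$ obeys $\cost$ via the constant approximation. The moral is that ``obeys'' asks for \emph{some} computable approximation of finite total cost, and since only the least number changing at a stage is charged, delayed and batched approximations can be far cheaper than the given enumeration. The cited proof (like the hard direction of Theorem~\ref{thm:Char_K}) constructs exactly such a witness: $x$ is enumerated into the new approximation of $A$ only once the computation $\Gamma_i^{Z_t}(x)=1$ has been certified. Your proposal never produces the object whose existence the theorem asserts.

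The dynamics of the rerouting are also not under control. The limit condition gives finitely many $x$ with $\cost^*(x)\ge 2^{-k}$, but a single such $x$ can trigger unboundedly many reroutes, since after each move a fresh computation $\Gamma_i^{Z_s}(x)\!\downarrow\,=1$ with a new use may appear --- and must keep appearing if $\mathcal P\subseteq\{Y:\Gamma_i^Y(x)\!\downarrow\,=1\}$, a situation you cannot exclude. So $(Z_s)$ need not converge, and if you cap the reroutes you gain nothing for that $x$. In the actual proof the number of moves of $Z$ is bounded by a measure computation: each permanent move abandons a subclass of $\mathcal P$ of definite positive measure (a ``box''), so $\lambda(\mathcal P)>0$ bounds the number of boxes per requirement and level, and also bounds the total certified cost wasted on abandoned boxes. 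That, rather than ``ample room to reroute'', is the real role of the positivity of $\lambda(\mathcal P)$, which you correctly establish but then use only decoratively. (A smaller repairable point: a node of $T_s$ of length $u$ need not be extendible in $T$, so a reroute target can die; the width bound you quote concerns extendible nodes, which are only co-c.e.)
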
 

This result has a complicated history.  It started with the main result of Greenberg~\citelow{Greenberg:11}.
\begin{theorem} \label{thm:Greenberg09}
 There is a ML-random $\DII$ set $Z$ such that every c.e.\ set $A$ Turing below $Z$ is strongly jump traceable. 
\end{theorem}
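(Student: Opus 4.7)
The plan is to combine Theorem~\ref{thm:NGbenign} with Theorem~\ref{thm:P}. By the former, it suffices to construct a ML-random $\DII$ set $Z$ such that every c.e.\ $A\le_T Z$ obeys every benign cost function; the latter, for any \emph{single} cost function $\cost^\star$ with the limit condition and any nonempty $\PPI$ class $\+P$ of ML-randoms, produces such a $Z\in\+P$ meeting $\cost^\star$. The strategy is therefore to cook up a single $\cost^\star$ having the limit condition whose obedience by a c.e.\ set implies obedience to every benign cost function, and then invoke Theorem~\ref{thm:P} with $\+P$ the complement of the first component of a universal ML-test (so $\+P\sub \MLR$).

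To build $\cost^\star$, I would enumerate a uniformly computable list $(\widehat\cost_e,b_e)\sN e$ of candidate benign pairs, where each $\widehat\cost_e\colon\NN^2\to\QQ^+\cap[0,1]$ is by construction a cost function and each $b_e\colon\NN\to\NN$ is total computable. Whenever at some finite stage the inequality certifying $b_e$ as a benignity witness for $\widehat\cost_e$ is visibly violated (we have already observed more than $b_e(k)$ pairwise disjoint intervals $[x,t)$ with $\widehat\cost_e(x,t)\ge 2^{-k}$), freeze $\widehat\cost_e$ from that stage onward. This neutralizes malignant candidates while keeping every truly benign cost function (capped at $1$) present as some $\widehat\cost_e$ that is never frozen. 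Now set
\begin{equation*}
\cost^\star(x,s)\;=\;\sum_{e<s}2^{-e}\,\widehat\cost_e(x,s).
\end{equation*}
This is a computable rational-valued cost function bounded by $2$; monotone convergence together with the limit conditions of the $\widehat\cost_e$'s yields the limit condition for $\cost^\star$. If a computable approximation of a $\DII$ set $A$ has finite total $\cost^\star$-cost $C$, then for each $e$ the total $\widehat\cost_e$-cost is at most $2^e C$, hence finite; and for a $\DII$ set, obeying the capped $\widehat\cost_e$ is equivalent to obeying the original benign $\cost$, since in any valid $\DII$ approximation the least change position eventually exceeds the threshold $x_0$ beyond which $\cost\le 1$.

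The main obstacle I expect is that benignity is a $\PI 2$ property, so benign cost functions are not literally effectively enumerable. The freezing device above resolves this by silently neutralizing impostors without losing any genuine benign cost function, each of which survives unchanged under some index $e$. A secondary care point is merely verifying that capping and freezing preserve the cost-function axioms along the enumeration, which is a routine bookkeeping matter. Once $\cost^\star$ is in hand, the construction of $Z$ itself is outsourced entirely to Theorem~\ref{thm:P}; the hard technical core of Theorem~\ref{thm:Greenberg09} has thus been decomposed into the two ingredients Theorem~\ref{thm:NGbenign} and Theorem~\ref{thm:P}.
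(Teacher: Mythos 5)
Your top-level decomposition is the right one, and it is in fact the paper's (third) route to this theorem: produce a \emph{single} cost function $\cost$ with the limit condition such that every c.e.\ set obeying $\cost$ is strongly jump traceable, then feed it to Theorem~\ref{thm:P} with $\P$ a nonempty $\PPI$ class of ML-random sets. The gap is in how you manufacture that single cost function. Your ``universal benign'' $\cost^\star$ cannot be built as described. The freezing device only polices the $\SI 1$-refutable part of benignity (``$b_e$ does not undercount the disjoint intervals at levels where it converges''); it cannot enforce that the witness $b_e$ is \emph{total}, which is a $\PI 2$ condition, and the total computable functions admit no uniformly computable listing. A candidate pair whose witness $\varphi_j$ diverges at some level $k$ is never frozen on account of level $k$, so a non-benign $\widehat{\cost}_e$ --- indeed one violating the limit condition, e.g.\ $\widehat{\cost}_e(x,s)=1$ for all $x<s$ --- can survive forever. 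A single such impostor contributes $2^{-e}$ to $(\cost^\star)^*(x)$ for every $x$, so $\cost^\star$ loses the limit condition, Theorem~\ref{thm:P} no longer applies, and in this example only computable sets obey $\cost^\star$ at all.

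There is also a principled reason why no repair of the enumeration can work. If your list really consisted of uniformly computable, genuinely benign pairs capturing every benign cost function, then $\cost^\star=\sum_e 2^{-e}\widehat{\cost}_e$ would itself be benign: an interval $[x,t)$ with $\cost^\star(x,t)\ge 2^{-k}$ forces $\widehat{\cost}_e(x,t)\ge 2^{-k-1}/(k+2)$ for some $e\le k+1$, so $k\mapsto\sum_{e\le k+1}b_e\bigl(k+2+\lceil\log_2(k+2)\rceil\bigr)$ is a computable benignity bound. By Theorem~\ref{thm:NGbenign}, the c.e.\ sets obeying $\cost^\star$ would then be \emph{exactly} the c.e.\ strongly jump traceable sets, i.e.\ a single cost function would characterize that class. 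This is impossible: for any fixed $\cost$, the index set $\{e:\ W_e \text{ obeys } \cost\}$ is at most $\SI 4$, whereas Ng showed that the index set of the c.e.\ strongly jump traceable sets is $\PI 4$-complete. So the cost function needed here must imply strong jump traceability \emph{without} implying obedience to all benign cost functions; this is precisely the hand-built, non-benign cost function of Greenberg, Hirschfeldt and Nies, obtained from their box-promotion machinery, and constructing it is the technical core that your argument attempts to bypass. (The paper's other two routes --- Greenberg's direct construction, and taking $Z$ Demuth random as in Ku\v{c}era--Nies --- likewise do genuine work at exactly this point.)
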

Greenberg  built such a  set $Z$ directly in early 2009. Thereafter, \Kuc\ and Nies~\citelow{Kucera.Nies:11} showed that any Demuth random set~$Z$ (see \cite[Section 3.6]{Nies:book}) does the job. (This is another instance of the main principle of this act: if $Z$ is $\DII$, it needs to change a lot in order to be Demuth random. This means that $A$ can only change little.) Greenberg et al.~\citelow{Greenberg.Hirschfeldt.ea:nd}   defined a cost function~$\cost$ such that every c.e.\ set $A$ obeying~$\cost$ is strongly jump traceable. They combined this with  their  Theorem~\ref{thm:P} to obtain  yet another  proof of the    result of  Greenberg. 

In fact, in Theorem~\ref{thm:P}, instead of ML-randomness of $Z$ we can take membership in any non-empty $\PPI$ class by a result of Nies~\citelow{Nies:costfunctions}. 
In that construction, the more restrictive $\cost$, the more $Z$ has to change. If~$\cost$ is benign as defined before Theorem~\ref{thm:NGbenign}, then it is not very restrictive. In this case, the construction makes the set $Z$ $\omega$-c.a. This is predicted by (the contrapositive of) the main principle of this act: if $A$ is allowed more changes, then $Z$ can change less. 
 The extension to $\PPI$ classes shows that in Theorem~\ref{thm:Greenberg09}, randomness of $Z$ can for instance be replaced by PA completeness. 

\bc {\it Exeunt omnes. } \ec 

% 
%       \subsection*{References}
%
% \begin{itemize}  
% 
% \item  These and other slides,   on Nies'   web page.  
%
% \item   Figueira, Hirschfeldt, Miller, Ng, N.  {\it Counting the changes of random $\DII$ sets.}  Logic and Computation, in press.
% 
% \item Greenberg and N. {\it Benign cost functions and lowness properties. }   J.~Symb. Logic 76, Issue 1 (2011)
% \item N. {\it  Calculus of cost functions.}  In preparation. 
% 
% \item My book ``Computability and Randomness'' for background. Paperback version with typos corrected will come out early 2012.  
%\end{itemize}
%
%
%\bibliographystyle{plain}
%
%\bibliography{../../Logicsharing/bibs/Nies,../../Logicsharing/bibs/randomness,../../Logicsharing/bibs/various,../../Logicsharing/bibs/recursiontheory,../../Logicsharing/bibs/analysis}

\begin{thebibliography}{10}

\bibitem{Bickford.Mills:nd}
M.~Bickford and C.F. Mills.
\newblock Lowness properties of r.e.\ sets.
\newblock Preprint, University of Madison, 1982. To appear in J.\ Symb.\ Logic.

\bibitem{Cholak.Downey.ea:08}
P.~Cholak, R.~Downey, and N.~Greenberg.
\newblock Strongly jump-traceability {I}: the computably enumerable case.
\newblock {\em Adv. in Math.}, 217:2045--2074, 2008.

\bibitem{Downey.Greenberg:nd}
R.~Downey and N.~Greenberg.
\newblock Strong jump traceability {II}: $K$-triviality.
\newblock {\em Israel J. Math.} 191: 647-667, 2012.
\newblock In press.

\bibitem{Downey.Hirschfeldt:book}
R.~Downey and D.~Hirschfeldt.
\newblock {\em Algorithmic randomness and complexity}.
\newblock Springer-Verlag, Berlin, 2010.
\newblock 855 pages.

\bibitem{Downey.Hirschfeldt.ea:03}
R.~Downey, D.~Hirschfeldt, A.~Nies, and F.~Stephan.
\newblock Trivial reals.
\newblock In {\em Proceedings of the 7th and 8th Asian Logic Conferences},
  pages 103--131, Singapore, 2003. Singapore University Press.

\bibitem{Figueira.Hirschfeldt.ea:nd}
S.~Figueira, D.~Hirschfeldt, J.~Miller, Selwyn Ng, and A~Nies.
\newblock Counting the changes of random {$\Delta^0_2$} sets.
\newblock In {\em CiE 2010}, pages 1--10, 2010.
\newblock Journal version to appear in J.Logic. Computation.

\bibitem{Figueira.ea:08}
S.~Figueira, A.~Nies, and F.~Stephan.
\newblock Lowness properties and approximations of the jump.
\newblock {\em Ann. Pure Appl. Logic}, 152:51--66, 2008.

\bibitem{Franklin.Ng:10}
J.~Franklin and K.~M. Ng.
\newblock Difference randomness.
\newblock {\em Proceedings of the American Mathematical Society}.
\newblock To appear.

\bibitem{Greenberg:11}
N.~Greenberg.
\newblock A random set which only computes strongly jump-traceable c.e. sets.
\newblock {\em J. Symbolic Logic}, 76(2):700--718, 2011.

\bibitem{Greenberg.Hirschfeldt.ea:nd}
N.~Greenberg, D.~Hirschfeldt, and A.\ Nies.
\newblock Characterizing the strongly jump traceable sets via randomness.
\newblock Adv. Math. 231 (2012), no. 3-4, 2252 -- 2293.



\bibitem{Greenberg.Nies:11}
N.~Greenberg and A.\ Nies.
\newblock Benign cost functions and lowness properties.
\newblock {\em J. Symbolic Logic}, 76:289--312, 2011.

\bibitem{Hirschfeldt.Nies.ea:07}
D.~Hirschfeldt, A.~Nies, and F.~Stephan.
\newblock Using random sets as oracles.
\newblock {\em J. Lond. Math. Soc. (2)}, 75(3):610--622, 2007.

\bibitem{Kucera:86}
A.~Ku{\v{c}}era.
\newblock An alternative, priority-free, solution to {P}ost's problem.
\newblock In {\em Mathematical foundations of computer science, 1986
  (Bratislava, 1986)}, volume 233 of {\em Lecture Notes in Comput. Sci.}, pages
  493--500. Springer, Berlin, 1986.

\bibitem{Kucera.Terwijn:99}
A.~Ku{\v{c}}era and S.~Terwijn.
\newblock Lowness for the class of random sets.
\newblock {\em J. Symbolic Logic}, 64:1396--1402, 1999.

\bibitem{Kurtz:81}
S.~Kurtz.
\newblock {\em Randomness and genericity in the degrees of unsolvability}.
\newblock Ph.{D.} {D}issertation, University of Illinois, Urbana, 1981.

\bibitem{Kucera.Nies:11}
A.~Ku\v{c}era and A~Nies.
\newblock Demuth randomness and computational complexity.
\newblock {\em Ann. Pure Appl. Logic}, 162:504--513, 2011.

\bibitem{Kucera.Nies:12}
Anton\'{\i}n Ku\v{c}era and Andr{\'e} Nies.
\newblock Demuth's path to randomness.
\newblock In {\em Proceedings of the 2012 international conference on
  Theoretical Computer Science: computation, physics and beyond}, WTCS'12,
  pages 159--173, Berlin, Heidelberg, 2012. Springer-Verlag.

\bibitem{Martin-Lof:66}
P.~Martin-L{\"o}f.
\newblock The definition of random sequences.
\newblock {\em Inform. and Control}, 9:602--619, 1966.

\bibitem{Mohrherr:86}
J.~Mohrherr.
\newblock A refinement of low{$_n$} and high{$_n$} for the r.e.\ degrees.
\newblock {\em Z. Math. Logik Grundlag. Math.}, 32(1):5--12, 1986.

\bibitem{Nies:tut}
A.\ Nies.
\newblock Applying randomness to computability.
\newblock University of Auckland preprint based on a series of three lectures
  at the ASL summer meeting, Sofia, 2009. Available at
  \texttt{http://hdl.handle.net/2292/19526}.

\bibitem{Nies:AM}
A.\ Nies.
\newblock Lowness properties and randomness.
\newblock {\em Adv. in Math.}, 197:274--305, 2005.

\bibitem{Nies:book}
A.~Nies.
\newblock {\em Computability and randomness}, volume~51 of {\em Oxford Logic
  Guides}.
\newblock Oxford University Press, Oxford, 2009.

\bibitem{Nies:5questions}
A.\ Nies.
\newblock Studying randomness through computation.
\newblock In {\em Randomness through computation}, pages 207--223. World
  Scientific, 2011.

\bibitem{Nies:costfunctions}
A.\ Nies.
\newblock Calculus of cost functions.
\newblock {I}n preparation, 2012.

\bibitem{Nies.Stephan.ea:05}
A.~Nies, F.~Stephan, and S.~Terwijn.
\newblock Randomness, relativization and {T}uring degrees.
\newblock {\em J. Symbolic Logic}, 70(2):515--535, 2005.

\bibitem{Soare:87}
Robert~I. Soare.
\newblock {\em Recursively Enumerable Sets and Degrees}.
\newblock Perspectives in Mathematical Logic, Omega Series. Springer--Verlag,
  Heidelberg, 1987.

\end{thebibliography}

\end{document}